\newtheorem{thm}{Theorem}[section]
\newtheorem{cor}[thm]{Corollary}
\newtheorem{lem}[thm]{Lemma}
\theoremstyle{definition}
\theoremstyle{remark}
\newtheorem{rem}[thm]{Remark}
\theoremstyle{conclusion}
\theoremstyle{conjecture}
\numberwithin{equation}{section}
\numberwithin{equation}{section}
\begin{document}

\title[Static Schr\"{o}dinger-Hartree and Schr\"{o}dinger-Maxwell equations]{Classification of nonnegative solutions to static Schr\"{o}dinger-Hartree and Schr\"{o}dinger-Maxwell equations with combined nonlinearities}

\author{Wei Dai$^\dag$, Zhao Liu$^\ddag$$^*$}

\address{$^\dag$ School of Mathematics and Systems Science, Beihang University (BUAA), Beijing 100083, P. R. China, and LAGA, Université Paris 13 (UMR 7539), Paris, France}
\email{weidai@buaa.edu.cn}

\address{$^\ddag$School of Mathematics and Computer Science, Jiangxi Science and Technology Normal University, Nanchang 330038, P. R. China, and Department of Mathematics, Yeshiva University, New York, NY, USA}
\email{liuzhao@mail.bnu.edu.cn}

\thanks{$^*$ Corresponding author: Zhao Liu at liuzhao@mail.bnu.edu.cn. \\
Wei Dai was supported by the NNSF of China (No. 11501021), the Fundamental Research Funds for the Central Universities and the State Scholarship Fund of China (No. 201806025011); Zhao Liu was supported by the NNSF of China (No. 11801237) and the State Scholarship Fund of China (No. 201808360005).}

\begin{abstract}
In this paper, we are concerned with static Schr\"{o}dinger-Hartree and Schr\"{o}dinger-Maxwell equations with combined nonlinearities. We derive the explicit forms for positive solution $u$ in the critical case and non-existence of nontrivial nonnegative solutions in the subcritical cases (see Theorem \ref{Thm0} and \ref{Thm1}). The arguments used in our proof is a variant (for nonlocal nonlinearity) of the direct moving spheres method for fractional Laplacians in \cite{CLZ}. The main ingredients are the variants (for nonlocal nonlinearity) of the maximum principles, i.e., \emph{Narrow region principle} (Theorem \ref{Thm2} and \ref{Thm3}).
\end{abstract}
\maketitle {\small {\bf Keywords:} Fractional Laplacians; Schr\"{o}dinger-Hartree equations; Schr\"{o}dinger-Maxwell equations; Nonnegative solutions; Nonlocal nonlinearities; Direct method of moving spheres.\\

{\bf 2010 MSC} Primary: 35R11; Secondary: 35B06, 35B53.}

\section{Introduction}

In this paper, we first consider the following static Schr\"{o}dinger-Hartree equation with combined nonlinearities
\begin{equation}\label{PDEH}\\\begin{cases}
(-\Delta)^{\frac{\alpha}{2}}u(x)=c_{1}\Big(\frac{1}{|x|^{2\alpha}}\ast|u|^{2}\Big)u^{p_{1}}(x)+c_2u^{p_{2}}(x), \,\,\,\,\,\,\,\, x\in\mathbb{R}^{n}, \\
u(x)\geq0, \,\,\,\,\,\,\, x\in\mathbb{R}^{n},
\end{cases}\end{equation}
where $0<\alpha\leq2$, $n\geq2$, $n>2\alpha$, $c_1,c_2\geq0$ with $c_{1}+c_{2}>0$, $0<p_{1}\leq1$ and $0<p_{2}\leq\frac{n+\alpha}{n-\alpha}$. We assume $u\in C^{1,1}_{loc}\cap\mathcal{L}_{\alpha}(\mathbb{R}^{n})$ if $0<\alpha<2$, and $u\in C^{2}(\mathbb{R}^{n})$ if $\alpha=2$, where
\begin{equation}\label{0-1}
  \mathcal{L}_{\alpha}(\mathbb{R}^{n}):=\Big\{u:\mathbb{R}^{n}\rightarrow\mathbb{R}\,\big|\,\int_{\mathbb{R}^{n}}\frac{|u(y)|}{1+|y|^{n+\alpha}}dy<\infty\Big\}.
\end{equation}
The nonlocal fractional Laplacians $(-\Delta)^{\frac{\alpha}{2}}$ with $0<\alpha<2$ are defined by (see \cite{CFY,CLL,CLZ,S,ZCCY})
\begin{equation}\label{nonlocal defn}
  (-\Delta)^{\frac{\alpha}{2}}u(x)=C_{\alpha,n} \, P.V.\int_{\mathbb{R}^n}\frac{u(x)-u(y)}{|x-y|^{n+\alpha}}dy:=C_{\alpha,n}\lim_{\epsilon\rightarrow0}\int_{|y-x|\geq\epsilon}\frac{u(x)-u(y)}{|x-y|^{n+\alpha}}dy
\end{equation}
for functions $u\in C^{1,1}_{loc}\cap\mathcal{L}_{\alpha}(\mathbb{R}^{n})$, where the constant $C_{\alpha,n}=\left(\int_{\mathbb{R}^{n}}\frac{1-\cos(2\pi\zeta_{1})}{|\zeta|^{n+\alpha}}d\zeta\right)^{-1}$.

In recent years, there has been a great deal of interest in using the fractional Laplacians to model diverse physical phenomena, such as anomalous diffusion and quasi-geostrophic flows, turbulence and water waves, molecular dynamics and relativistic quantum mechanics of stars. However, the non-local feature of the fractional Laplacians makes it difficult to study. In order to overcome this difficulty, Chen, Li and Ou \cite{CLO} developed the method of moving planes in integral forms. Subsequently, Caffarelli and Silvestre \cite{CS} introduced an extension method to overcome this difficulty, which reduced this nonlocal problem into a local one in higher dimensions. This extension method provides a powerful tool and leads to very active studies in equations involving the fractional Laplacians, and a series of fruitful results have been obtained (see \cite{Br,CZ} and the references therein).

In \cite{CLL}, Chen, Li and Li developed a direct method of moving planes for the fractional Laplacians (see also \cite{DFQ}). Instead of using the extension method of Caffarelli and Silvestre \cite{CS}, they worked directly on the non-local operator to establish strong maximum principles for anti-symmetric
functions and narrow region principles, and then obtained classification and Liouville type results for nonnegative solutions. The direct method of moving planes introduced in \cite{CLL} has been applied to study more general nonlocal operators with general nonlinearities (see \cite{CLLG,DFQ}). The methods of moving planes was initially invented by Alexanderoff in the early 1950s. Later, it was further developed by Serrin \cite{S}, Gidas, Ni and Nirenberg \cite{GNN,GNN1}, Caffarelli, Gidas and Spruck \cite{CGS}, Chen and Li \cite{CL}, Li and Zhu \cite{LZ}, Lin \cite{Lin}, Chen, Li and Ou \cite{CLO}, Chen, Li and Li \cite{CLL}, Dai and Qin \cite{DQ0} and many others. For more literatures on the classification of solutions and Liouville type theorems for various PDE and IE problems via the methods of moving planes or spheres, please refer to \cite{CD,CFY,CL1,CLZ,CY,DFHQW,DL,DLL,DQ,FC,LD,MZ,WX} and the references therein.

Chen, Li and Zhang introduced in \cite{CLZ} another direct method - the method of moving spheres on the fractional Laplacians, which is more convenient than the method of moving planes. The method of moving spheres was initially used by Padilla \cite{Pa}, Chen and Li \cite{CL2} and Li and Zhu \cite{LZ}. It can be applied to capture the explicit form of solutions directly rather than going through the procedure of deriving radial symmetry of solutions and then classifying radial solutions. In a recent work \cite{DQ0}, Dai and Qin developed the method of scaling spheres, which is essentially a frozen variant of the method of moving spheres and becomes a powerful tool in deriving asymptotic estimates for solutions. The method of scaling spheres can be applied to various fractional or higher order problems without translation invariance or in the cases Kelvin transforms in conjunction with the method of moving planes do not work (see \cite{DQ0,DQ1,DQZ} and the references therein).

When $c_2=0$ and $p_{1}=1$, PDEs of type \eqref{PDEH} arise in the Hartree-Fock theory of the nonlinear Schr\"{o}dinger equations (see \cite{LS}). The solution $u$ to problem \eqref{PDEH} is also a ground state or a stationary solution to the following $\dot{H}^{\frac{\alpha}{2}}$-critical focusing dynamic Schr\"{o}dinger-Hartree equation
\begin{equation}\label{Hartree}
  i\partial_{t}u+(-\Delta)^{\frac{\alpha}{2}}u=c_{1}\left(\frac{1}{|x|^{2\alpha}}\ast|u|^{2}\right)u, \,\,\,\,\,\,\,\,\, (t,x)\in\mathbb{R}\times\mathbb{R}^{n}.
\end{equation}
The Schr\"{o}dinger-Hartree equations have many interesting applications in the quantum theory of large systems of non-relativistic bosonic atoms and molecules (see, e.g. \cite{FL}). Dynamic equations of the type \eqref{Hartree} have been quite extensively studied, please refer to \cite{LMZ,MXZ3} and the references therein. The ground state solution can be regarded as a crucial criterion or threshold for global well-posedness and scattering in the focusing case. Therefore, the classification of solutions to \eqref{PDEH} plays an important and fundamental role in the study of the focusing Schr\"{o}dinger-Hartree equations \eqref{Hartree}.

There are lots of literatures on the qualitative properties of solutions to Hartree and Choquard equations of fractional or higher order, please see e.g. Cao and Dai \cite{CD}, Chen and Li \cite{CL0}, Dai, Fang, et al. \cite{DFHQW}, Dai and Qin \cite{DQ}, Lieb \cite{Lieb}, Lei \cite{Lei}, Liu \cite{Liu}, Moroz and Schaftingen \cite{MS}, Ma and Zhao \cite{MZ}, Xu and Lei \cite{XL} and the references therein. Liu proved in \cite{Liu} the classification results for positive solutions to \eqref{PDEH} with $\alpha=2$, $c_{2}=0$ and $p_{1}=1$, by using the idea of considering the equivalent systems of integral equations instead, which was initially used by Ma and Zhao \cite{MZ}. In \cite{CD}, Cao and Dai considered the differential equations directly and classified all the positive $C^{4}$ solutions to the $\dot{H}^{2}$-critical bi-harmonic equation \eqref{PDEH} with $\alpha=4$ and $c_{2}=0$. They also derived Liouville theorem in the subcritical cases. For general $0<\alpha<\frac{n}{2}$, Dai, Fang, et al. \cite{DFHQW} classified all the positive $H^{\frac{\alpha}{2}}(\mathbb{R}^{n})$ weak solutions to \eqref{PDEH} with $c_{2}=0$ and $p_{1}=1$ by using the method of moving planes in integral forms due to Chen, Li and Ou \cite{CLO,CLO1}. They also classified all the $L^{\frac{2n}{n-\alpha}}(\mathbb{R}^{n})$ integrable solutions to the equivalent integral equations. For $0<\alpha<\min\{2,\frac{n}{2}\}$, Dai, Fang and Qin \cite{DFQ} classified all the $C^{1,1}_{loc}\cap\mathcal{L}_{\alpha}$ solutions to \eqref{PDEH} with $c_{2}=0$ and $p_{1}=1$ by applying a variant (for nonlocal nonlinearity) of the direct method of moving planes for fractional Laplacians. The qualitative properties of solutions to general fractional order or higher order elliptic equations have also been extensively studied, for instance, see Chen, Fang and Yang \cite{CFY}, Chen, Li and Li \cite{CLL}, Chen, Li and Ou \cite{CLO}, Caffarelli and Silvestre \cite{CS}, Chang and Yang \cite{CY}, Dai and Qin \cite{DQ,DQ0,DQ1,DQZ}, Fang and Chen \cite{FC}, Lin \cite{Lin}, Wei and Xu \cite{WX} and the references therein.

In this paper, we will apply a variant (for nonlocal nonlinearity) of the direct method of moving spheres for fractional Laplacians due to Chen, Li and Zhang \cite{CLZ} to establish the following complete classification theorem for the Schr\"{o}dinger-Hartree equation \eqref{PDEH}.

\begin{thm}\label{Thm0}
Assume $n\geq2$, $n>2\alpha$, $0<\alpha\leq2$, $c_1,c_2\geq0$ with $c_{1}+c_{2}>0$, $0<p_{1}\leq1$ and $0<p_{2}\leq\frac{n+\alpha}{n-\alpha}$. Suppose $u$ is a nonnegative classical solution of \eqref{PDEH}. If $c_{1}(1-p_{1})+c_{2}(\frac{n+\alpha}{n-\alpha}-p_{2})=0$, then we have either $u\equiv0$ or $u$ must assume the following form
\begin{equation*}
  u(x)=C\left(\frac{\mu}{1+\mu^{2}|x-x_0|^2}\right)^{\frac{n-\alpha}{2}} \qquad \text{for some} \,\,\, \mu>0 \,\,\, \text{and} \,\,\, x_{0}\in\mathbb{R}^{n},
\end{equation*}
where the constant $C$ depends on $n,\alpha,c_{1},c_{2}$. If $c_{1}(1-p_{1})+c_{2}(\frac{n+\alpha}{n-\alpha}-p_{2})>0$, then $u\equiv0$ in $\mathbb{R}^{n}$.
\end{thm}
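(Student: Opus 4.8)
The plan is to run the direct method of moving spheres for the fractional Laplacian in the variant (for nonlocal nonlinearity) built around the narrow region principles of Theorems \ref{Thm2} and \ref{Thm3}, in the spirit of \cite{CLZ}. First I would dispose of the degenerate case: if $u(x_{1})=0$ at some $x_{1}$, then evaluating \eqref{PDEH} at $x_{1}$ gives $(-\Delta)^{\frac{\alpha}{2}}u(x_{1})=0$, which for $0<\alpha<2$ reads $-C_{\alpha,n}\int_{\mathbb{R}^{n}}|x_{1}-y|^{-(n+\alpha)}u(y)\,dy=0$ and forces $u\equiv0$ (for $\alpha=2$ the strong maximum principle does the same). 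So I may assume $u>0$ in $\mathbb{R}^{n}$. Next, for $x_{0}\in\mathbb{R}^{n}$ and $\lambda>0$ I introduce the Kelvin transform $u_{x_{0},\lambda}(x)=\big(\tfrac{\lambda}{|x-x_{0}|}\big)^{n-\alpha}u\big(x_{0}+\tfrac{\lambda^{2}(x-x_{0})}{|x-x_{0}|^{2}}\big)$ and, using the conformal covariance of $(-\Delta)^{\frac{\alpha}{2}}$ together with the inversion substitution inside the Riesz convolution, I compute that $u_{x_{0},\lambda}$ satisfies on $\mathbb{R}^{n}\setminus\{x_{0}\}$
\[
(-\Delta)^{\frac{\alpha}{2}}u_{x_{0},\lambda}=c_{1}\Big(\tfrac{\lambda}{|x-x_{0}|}\Big)^{(n-\alpha)(1-p_{1})}\Big(\tfrac{1}{|x|^{2\alpha}}\ast u_{x_{0},\lambda}^{2}\Big)u_{x_{0},\lambda}^{p_{1}}+c_{2}\Big(\tfrac{\lambda}{|x-x_{0}|}\Big)^{(n-\alpha)\left(\frac{n+\alpha}{n-\alpha}-p_{2}\right)}u_{x_{0},\lambda}^{p_{2}}.
\]
Both exponents are nonnegative (this is exactly where $0<p_{1}\le1$ and $p_{2}\le\frac{n+\alpha}{n-\alpha}$ enter), and the quantity $c_{1}(1-p_{1})+c_{2}(\tfrac{n+\alpha}{n-\alpha}-p_{2})$ vanishes precisely when every active conformal factor is $\equiv1$. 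Hence in the critical case $u_{x_{0},\lambda}$ solves the \emph{same} equation \eqref{PDEH}, whereas in the subcritical case it is a \emph{strict subsolution} of \eqref{PDEH} on the exterior region $\Sigma_{\lambda}:=\{x:|x-x_{0}|>\lambda\}$, where the factors are $<1$.

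Then I would run the moving spheres in $\Sigma_{\lambda}$ with $w_{x_{0},\lambda}:=u_{x_{0},\lambda}-u$. For small $\lambda$ one has $w_{x_{0},\lambda}\le0$ on $\Sigma_{\lambda}$: on bounded sets the prefactor $\lambda^{n-\alpha}$ makes $u_{x_{0},\lambda}$ small while $u$ is bounded below, near infinity $u_{x_{0},\lambda}\to0$ while $u\ge0$, and the remaining region is handled by the narrow region principle for nonlocal nonlinearities (Theorems \ref{Thm2} and \ref{Thm3}), after writing the difference of the nonlinear terms as $c(x)\,w_{x_{0},\lambda}(x)$ plus a Hartree remainder, with $c(x)\to0$ at infinity. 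Setting $\bar\lambda(x_{0}):=\sup\{\mu>0:\ w_{x_{0},\lambda}\le0\ \text{on}\ \Sigma_{\lambda}\ \text{for all}\ 0<\lambda\le\mu\}$, a standard continuation argument — the narrow region principle near $\partial B_{\bar\lambda}(x_{0})$ together with the strong maximum principle — shows that whenever $\bar\lambda(x_{0})<\infty$ one in fact has $w_{x_{0},\bar\lambda}\equiv0$, i.e. $u\equiv u_{x_{0},\bar\lambda}$.

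Now I conclude by a dichotomy. In the subcritical case $u\equiv u_{x_{0},\bar\lambda}$ with $\bar\lambda<\infty$ is impossible, since by the transformed equation it would make $u$ simultaneously an exact solution of \eqref{PDEH} and a strict subsolution on $\Sigma_{\bar\lambda}$; therefore $\bar\lambda(x_{0})=\infty$ for every $x_{0}$, so $u_{x_{0},\lambda}\le u$ for all $x_{0}$ and all $\lambda>0$, and the classical calculus lemma of Li--Zhu then forces $u$ to be constant — which contradicts \eqref{PDEH} for a positive constant when $n>2\alpha$ (the Riesz potential of a positive constant is infinite, and if $c_{1}=0$ one simply has $c_{2}u^{p_{2}}>0=(-\Delta)^{\frac{\alpha}{2}}u$). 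Hence $u\equiv0$. In the critical case, the same calculus lemma rules out $\bar\lambda(x_{0})=\infty$ for all $x_{0}$; so $\bar\lambda(x_{0})<\infty$ and $u\equiv u_{x_{0},\bar\lambda}$ for some $x_{0}$, and the standard argument of \cite{CLZ} upgrades this to the statement that for every $x\in\mathbb{R}^{n}$ there is $\lambda(x)>0$ with $u\equiv u_{x,\lambda(x)}$. The Li--Zhu lemma then yields $u(x)=C\big(\tfrac{\mu}{1+\mu^{2}|x-x_{0}|^{2}}\big)^{(n-\alpha)/2}$ for some $\mu>0$ and $x_{0}\in\mathbb{R}^{n}$, and substituting this profile into \eqref{PDEH} — using the explicit action of $(-\Delta)^{\frac{\alpha}{2}}$ and of $\tfrac{1}{|x|^{2\alpha}}\ast(\cdot)$ on the standard bubble — pins down the constant $C=C(n,\alpha,c_{1},c_{2})$.

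I expect the main obstacle to be the nonlocality of the Hartree term in the moving spheres step. In the equation for $w_{x_{0},\lambda}$ the difference $\big(\tfrac{1}{|x|^{2\alpha}}\ast u_{x_{0},\lambda}^{2}\big)u_{x_{0},\lambda}^{p_{1}}-\big(\tfrac{1}{|x|^{2\alpha}}\ast u^{2}\big)u^{p_{1}}$ picks up contributions from the ball $B_{\lambda}(x_{0})$, where $w_{x_{0},\lambda}$ has a priori the opposite sign, so the comparison is not purely interior to $\Sigma_{\lambda}$. Controlling this remainder — so that the narrow region principles of Theorems \ref{Thm2} and \ref{Thm3} still apply — requires exploiting the anti-symmetry relations of the Kelvin transform that link the values of $w_{x_{0},\lambda}$ at a point and at its inversion across $\partial B_{\lambda}(x_{0})$; this is the technical heart of the nonlocal variant of the method.
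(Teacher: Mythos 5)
Your overall scheme matches the paper's: Kelvin transform, a variant of the narrow region principle adapted to the nonlocal Hartree term, the dichotomy on the limiting radius $\bar\lambda(x)$, and the Li--Zhu calculus lemmas. The cosmetic difference of working on $\Sigma_\lambda$ with $w\le 0$ instead of in $B_\lambda(x)\setminus\{x\}$ with $\omega\ge 0$ is harmless by the anti-symmetry $\omega_{x,\lambda}=-(\omega_{x,\lambda})_{x,\lambda}$. Your subcritical contradiction, obtained by feeding $u\equiv u_{x_0,\bar\lambda}$ directly back into the transformed PDE and observing a strict sign from $\big(\tfrac{\bar\lambda}{|y-x_0|}\big)^{\tau_i}\neq 1$, is a legitimate variant of what the paper does via the integral representation and is, if anything, cleaner.

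There is, however, a genuine gap at the ``start moving'' step, precisely where you wrote: ``near infinity $u_{x_{0},\lambda}\to 0$ while $u\ge0$.'' That inference is not valid: both $u_{x_0,\lambda}(y)$ and $u(y)$ decay like $|y|^{-(n-\alpha)}$, so you cannot conclude $w_{x_0,\lambda}\le 0$ near infinity from $u\ge0$ alone. What is actually needed is a \emph{quantitative} lower bound $u(y)\ge C|y|^{-(n-\alpha)}$, which in turn follows only after establishing the equivalent integral representation
\[
u(y)=R_{\alpha,n}\int_{\mathbb{R}^n}\frac{f(u(z))}{|y-z|^{n-\alpha}}\,dz
\]
and integrating over a fixed interior ball. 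In the paper this is the content of Lemma \ref{equivalent} and Lemma \ref{strictlypositive} (which, translated to your exterior formulation, furnishes $w_{x_0,\lambda}<0$ for $|y-x_0|\ge \lambda^{-1}$ once $\lambda$ is small). Without this lower bound the ``remaining region'' is not a narrow annulus near $\partial B_\lambda(x_0)$ but an unbounded neighborhood of infinity, to which the narrow region principle of Theorem \ref{Thm2} does not apply. A related imprecision: the narrow region principle is driven by the smallness of the thickness $l$ (or of $\lambda$), via the $\tfrac{C}{l^\alpha}$ gain in \eqref{small} and \eqref{2-19}, not by any decay of the coefficient $c(x)$ at infinity; your phrase ``with $c(x)\to0$ at infinity'' points to the wrong mechanism. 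Once the integral representation and the resulting lower bound on $u$ are inserted, your argument closes and coincides in substance with the paper's proof.
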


\begin{rem}\label{rem0}
Theorem \ref{Thm0} extends the classification results for \eqref{PDEH} in \cite{DFQ,Liu} from $c_{2}=0$ and $p_{1}=1$ to general cases $c_{2}\geq0$, $0<p_{1}\leq1$ and $0<p_{2}\leq\frac{n+\alpha}{n-\alpha}$.
\end{rem}

We will apply a variant (for nonlocal nonlinearity) of the direct method of moving spheres for fractional Laplacians developed by Chen, Li and Zhang \cite{CLZ} to prove Theorem \ref{Thm0}. More precisely, let us define the following notation
$$u_{x,\lambda}(y)=\left(\frac{\lambda}{|y-x|}\right)^{n-\alpha}u\left(\frac{\lambda^2(y-x)}{|y-x|^2}+x\right), \,\,\,\,\,\,\,\,\, \omega_{x,\lambda}(y)=u_{x,\lambda}(y)-u(y),$$
$$B^{-}_{\lambda}:=\{y\in B_{\lambda}(x)\setminus\{x\} \, | \, \omega_{x,\lambda}(y)<0\}.$$
The main ingredients in Chen, Li and Zhang's direct method of moving spheres for fractional Laplacians are maximum principles (i.e., \emph{Narrow region principle}) for the following problem
\begin{equation}\label{0-4}
  (-\Delta)^{\frac{\alpha}{2}}\omega_{x,\lambda}(y)+c(y)\omega_{x,\lambda}(y)\geq0 \,\,\,\,\, \text{in} \,\, \Omega \cap B_{\lambda}^-,
\end{equation}
where $\Omega\subseteq B_{\lambda}(x)\setminus\{x\}$ is a bounded region, $c(y)$ comes from applying mean value theorem to the difference between two nonlinearities defined point-wise and satisfies certain conditions. However, since the nonlinearities in our Schr\"{o}dinger-Hartree equation \eqref{PDEH} are nonlocal, the difference between two nonlinearities will become much more complicated and subtle. The differential inequality that one can derive from \eqref{PDEH} is the following
\begin{equation}\label{0-5}
  (-\Delta)^{\frac{\alpha}{2}}\omega_{x,\lambda}(y)-\mathcal{L}_{x,\lambda}(y)\omega_{x,\lambda}(y)-2c_{1}\left(\int_{\Sigma^{-}_{\lambda}}\frac{u(z)\omega_{x,\lambda}(z)}{|y-z|^{2\alpha}}dz\right)
  u^{p_{1}}(y)\geq0 \,\,\,\,\,\,\, \text{in} \,\,\,\, \Omega\cap B^{-}_{\lambda},
\end{equation}
from which we can observe that $\omega_{\lambda}$ will always appear in the convolution. It is difficult for us to simplify it into the desired form $c(y)\omega_{\lambda}(y)$. Fortunately, by more careful and refined analysis, we can circumvent this difficulty and establish the variants (for nonlocal nonlinearity) of the \emph{narrow region principle} for the problem \eqref{0-5} (see Theorem \ref{Thm2} in Section 2). We believe that the methods in this paper can be conveniently applied to study other fractional order equations with various nonlocal nonlinearities.

Through entirely similar arguments as in the proof of Theorem \ref{Thm0}, we will also classify all the nonnegative solutions to the following Schr\"{o}dinger-Maxwell equations with combined nonlinearities
\begin{equation}\label{PDEM}\\\begin{cases}
(-\Delta)^{\frac{\alpha}{2}}u(x)=c_{1}\Big(\frac{1}{|x|^{n-\alpha}}\ast|u|^{\frac{n+\alpha}{n-\alpha}}\Big)u^{q_{1}}(x)+c_2u^{q_{2}}(x), \,\,\,\,\,\,\,\, x\in\mathbb{R}^{n}, \\
u\in C^{1,1}_{loc}\cap\mathcal{L}_{\alpha}(\mathbb{R}^{n}), \,\,\,\,\, u(x)\geq0, \,\,\,\,\,\,\, x\in\mathbb{R}^{n},
\end{cases}\end{equation}
where $0<\alpha\leq2$, $n\geq2$, $n>\alpha$, $c_1,c_2\geq0$ with $c_{1}+c_{2}>0$, $0<q_{1}\leq\frac{2\alpha}{n-\alpha}$ and $0<q_{2}\leq\frac{n+\alpha}{n-\alpha}$. The Schr\"{o}dinger-Maxwell equations \eqref{PDEM} are equivalent to the following PDEs systems:
\begin{equation}\label{PDEM-S}\\\begin{cases}
(-\Delta)^{\frac{\alpha}{2}}u(x)=v(x)u^{q_{1}}(x)+c_2u^{q_{2}}(x), \,\,\,\,\,u(x)\geq0,\,\,\,\,\,\, x\in\mathbb{R}^{n}, \\
(-\Delta)^{\frac{\alpha}{2}}v(x)=c_{1}R^{-1}_{\alpha,n}\,u^{\frac{n+\alpha}{n-\alpha}}(x), \,\,\,\,\, v(x)\geq0, \,\,\,\,\,\,\, x\in\mathbb{R}^{n},
\end{cases}\end{equation}
where the Riesz potential's constants $R_{\alpha,n}:=\frac{\Gamma\big(\frac{n-\alpha}{2}\big)}{\pi^{\frac{n}{2}}2^{\alpha}\Gamma(\frac{\alpha}{2})}$ (see \cite{Stein}).

Chen and Li \cite{CL0} classified all the positive solutions to Schr\"{o}dinger-Maxwell equations \eqref{PDEM} with $c_{2}=0$ and $q_{1}=\frac{2\alpha}{n-\alpha}$ (see also \cite{XL}). In this paper, we will apply a variant (for nonlocal nonlinearity) of the direct method of moving spheres for fractional Laplacians due to Chen, Li and Zhang \cite{CLZ} to establish the following complete classification theorem for the Schr\"{o}dinger-Maxwell equation \eqref{PDEM}.

\begin{thm}\label{Thm1}
Assume $n\geq2$, $n>\alpha$, $0<\alpha\leq2$, $c_1,c_2\geq0$ with $c_{1}+c_{2}>0$, $0<q_{1}\leq\frac{2\alpha}{n-\alpha}$ and $0<q_{2}\leq\frac{n+\alpha}{n-\alpha}$. Suppose $u$ is a nonnegative classical solution of \eqref{PDEM}. If $c_{1}(\frac{2\alpha}{n-\alpha}-q_{1})+c_{2}(\frac{n+\alpha}{n-\alpha}-q_{2})=0$, then we have either $u\equiv0$ or $u$ must assume the following form
\begin{equation*}
  u(x)=C\left(\frac{\mu}{1+\mu^{2}|x-x_0|^2}\right)^{\frac{n-\alpha}{2}} \qquad \text{for some} \,\,\, \mu>0 \,\,\, \text{and} \,\,\, x_{0}\in\mathbb{R}^{n},
\end{equation*}
where the constant $C$ depends on $n,\alpha,c_{1},c_{2}$. If $c_{1}(\frac{2\alpha}{n-\alpha}-q_{1})+c_{2}(\frac{n+\alpha}{n-\alpha}-q_{2})>0$, then $u\equiv0$ in $\mathbb{R}^{n}$.
\end{thm}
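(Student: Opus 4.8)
The plan is to mirror, essentially verbatim, the proof of Theorem \ref{Thm0}, since the two equations differ only in the form of the nonlocal term: in \eqref{PDEH} it is $\bigl(|x|^{-2\alpha}\ast|u|^2\bigr)u^{p_1}$ with exponent constraint $0<p_1\le1$, whereas in \eqref{PDEM} it is $\bigl(|x|^{-(n-\alpha)}\ast|u|^{\frac{n+\alpha}{n-\alpha}}\bigr)u^{q_1}$ with $0<q_1\le\frac{2\alpha}{n-\alpha}$. First I would record the scaling (conformal) weights. Writing $u_{x,\lambda}$ for the Kelvin-type transform as in the text, one checks that the Riesz kernel $|z|^{-(n-\alpha)}$ has the exact conformal covariance $\bigl|\tfrac{\lambda^2(y-x)}{|y-x|^2}-\tfrac{\lambda^2(z-x)}{|z-x|^2}\bigr|^{-(n-\alpha)} = \tfrac{|y-x|^{n-\alpha}|z-x|^{n-\alpha}}{\lambda^{2(n-\alpha)}}|y-z|^{-(n-\alpha)}$, so that after the change of variables $z\mapsto \tfrac{\lambda^2(z-x)}{|z-x|^2}+x$ (whose Jacobian contributes $(\lambda/|z-x|)^{2n}$) the convolution $\bigl(|x|^{-(n-\alpha)}\ast|u|^{\frac{n+\alpha}{n-\alpha}}\bigr)$ transforms with weight $(\lambda/|y-x|)^{\alpha+?}$; carrying the bookkeeping through, the ``critical'' balance is precisely $q_1=\frac{2\alpha}{n-\alpha}$, $q_2=\frac{n+\alpha}{n-\alpha}$, which is why those are the thresholds. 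This is a routine but careful computation identical in spirit to the one underlying \eqref{0-5}.

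Second, I would set up the method of moving spheres. Define $\omega_{x,\lambda}=u_{x,\lambda}-u$ and $B_\lambda^-=\{y\in B_\lambda(x)\setminus\{x\}:\omega_{x,\lambda}(y)<0\}$ exactly as before. Subtracting the equation for $u$ from that for $u_{x,\lambda}$ and applying the mean value theorem to the two nonlinearities, one arrives at a differential inequality of the same structure as \eqref{0-5}: on $\Omega\cap B_\lambda^-$,
\begin{equation*}
(-\Delta)^{\frac{\alpha}{2}}\omega_{x,\lambda}(y)-\mathcal{L}_{x,\lambda}(y)\,\omega_{x,\lambda}(y)-c_1\Bigl(\int_{\Sigma_\lambda^-}\frac{u^{\frac{n+\alpha}{n-\alpha}-1}(z)\,\omega_{x,\lambda}(z)}{|y-z|^{n-\alpha}}\,dz\Bigr)u^{q_1}(y)\geq0,
\end{equation*}
where $\mathcal{L}_{x,\lambda}$ collects the local pieces (those coming from $u^{q_1}$, $u^{q_2}$, and the value of the convolution at $y$) and is nonnegative / has the decay needed to invoke the Narrow region principle; the key point, as stressed in the Introduction, is that $q_1\le\frac{2\alpha}{n-\alpha}$ ensures the sign/monotonicity properties that make the coefficient of $\omega_{x,\lambda}$ work in our favor, just as $p_1\le1$ did for \eqref{PDEH}. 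Here I would cite Theorem \ref{Thm3} (the Schr\"{o}dinger-Maxwell version of the Narrow region principle) in place of Theorem \ref{Thm2}.

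Third comes the moving-spheres machinery itself, identical to the proof of Theorem \ref{Thm0}: (i) a start step showing that for every $x$ there is $\lambda_0(x)>0$ with $\omega_{x,\lambda}\ge0$ in $B_\lambda(x)\setminus\{x\}$ for all $0<\lambda<\lambda_0(x)$, using the Narrow region principle near the sphere and the explicit behavior of $u_{x,\lambda}$ near $y=x$; (ii) defining $\bar\lambda(x)=\sup\{\lambda>0:\omega_{x,\mu}\ge0\text{ in }B_\mu(x)\setminus\{x\}\ \forall\,0<\mu\le\lambda\}$ and showing, by a contradiction argument using the Narrow region principle again together with a strong maximum principle for anti-symmetric functions, that one of two alternatives holds: either $\bar\lambda(x)<\infty$ for all $x$, in which case $\omega_{x,\bar\lambda(x)}\equiv0$ and a calculus lemma (as in \cite{CLZ,LZ}) forces $u$ to be exactly the stated bubble $C\bigl(\mu/(1+\mu^2|x-x_0|^2)\bigr)^{(n-\alpha)/2}$; or $\bar\lambda(x)=\infty$ for some $x$, which yields $u\equiv0$; (iii) finally, in the strictly subcritical case $c_1(\frac{2\alpha}{n-\alpha}-q_1)+c_2(\frac{n+\alpha}{n-\alpha}-q_2)>0$, the bubble cannot solve the equation (it has the wrong homogeneity on at least one term), so only $u\equiv0$ survives. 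Equivalently one may run the argument through the integral-system formulation \eqref{PDEM-S}. I expect the only genuinely new point over Theorem \ref{Thm0} to be the verification, in step two, that the coefficient $\mathcal{L}_{x,\lambda}$ and the nonlocal remainder term satisfy the hypotheses of Theorem \ref{Thm3} under the constraint $q_1\le\frac{2\alpha}{n-\alpha}$ (rather than $p_1\le1$); once that is in place, steps one and three are word-for-word the same, and I would simply say ``the remaining arguments are entirely analogous to the proof of Theorem \ref{Thm0}'' after dispatching the nonlocal-term estimate.
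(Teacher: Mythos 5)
Your proposal is correct and mirrors the paper's proof of Theorem \ref{Thm1}: the paper likewise records the conformal covariance of the Riesz kernel $|z|^{-(n-\alpha)}$, adapts the Narrow region principle to the Maxwell-type nonlocal term (Theorem \ref{Thm3}, with the key estimates on $Q$ and $\widetilde{Q}_{0,\lambda}$ replacing those on $P$ and $\widetilde{P}_{0,\lambda}$), and then runs the moving-spheres machinery word-for-word as in Theorem \ref{Thm0}, invoking the calculus lemmas of Li--Zhang in both alternatives. One small point of precision: in the paper the case $\bar\lambda(x)=+\infty$ does not yield $u\equiv 0$ but rather $u\equiv C>0$ by the calculus lemma, which contradicts the equation (the trivial solution is dispatched at the outset, before assuming $u>0$), and the subcritical contradiction is obtained directly from the sign of the $\bigl((\bar\lambda/|z|)^{\sigma_1}-1\bigr)$ and $\bigl((\bar\lambda/|z|)^{\sigma_2}-1\bigr)$ factors in the integral identity for $\omega_{0,\bar\lambda}$ rather than from a separate homogeneity observation about the bubble, though both observations reflect the same obstruction.
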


Theorem \ref{Thm1} can be proved in a quite similar way as the proof of Theorem \ref{Thm0}, thus we will only mention some main ingredients in its proof in Section 3.

\begin{rem}\label{rem1}
Theorem \ref{Thm1} extends the classification results for \eqref{PDEM} in \cite{CL0,XL} from $c_{2}=0$ and $q_{1}=\frac{2\alpha}{n-\alpha}$ to general cases $c_{2}\geq0$, $0<q_{1}\leq\frac{2\alpha}{n-\alpha}$ and $0<q_{2}\leq\frac{n+\alpha}{n-\alpha}$.
\end{rem}

As a consequence of Theorem \ref{Thm1}, we have the following corollary on complete classification results for the Schr\"{o}dinger-Maxwell systems \eqref{PDEM-S}.
\begin{cor}\label{Cor1}
Assume $n\geq2$, $n>\alpha$, $0<\alpha\leq2$, $c_1,c_2\geq0$ with $c_{1}+c_{2}>0$, $0<q_{1}\leq\frac{2\alpha}{n-\alpha}$ and $0<q_{2}\leq\frac{n+\alpha}{n-\alpha}$. Suppose $(u,v)$ is a pair of nonnegative classical solutions of the system \eqref{PDEM-S}. Then, we have either $(u,v)\equiv(0,C_{0})$ for some $C_{0}\geq0$, or $(u,v)$ must assume the following forms
\begin{equation}\label{explicit}
  u(x)=C_{1}\left(\frac{\mu}{1+\mu^{2}|x-x_0|^2}\right)^{\frac{n-\alpha}{2}} \qquad \text{and} \qquad v(x)=C_{2}\left(\frac{\mu}{1+\mu^{2}|x-x_0|^2}\right)^{\frac{n-\alpha}{2}}
\end{equation}
for some $\mu>0$ and $x_{0}\in\mathbb{R}^{n}$, where the positive constants $C_{1}$ and $C_{2}$ depend on $n,\alpha,c_{1},c_{2}$. Moreover, if $(u,v)$ assume the form \eqref{explicit}, then we must have $c_{1}(\frac{2\alpha}{n-\alpha}-q_{1})+c_{2}(\frac{n+\alpha}{n-\alpha}-q_{2})=0$.
\end{cor}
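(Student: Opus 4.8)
The plan is to deduce Corollary \ref{Cor1} from Theorem \ref{Thm1} through the equivalence between the system \eqref{PDEM-S} and the scalar equation \eqref{PDEM}, while keeping track of the one extra degree of freedom that the second equation of \eqref{PDEM-S} carries. Given a pair $(u,v)$ of nonnegative classical solutions of \eqref{PDEM-S}, I would first solve the second equation: applying the maximum principle on the balls $B_{R}(0)$ to $(-\Delta)^{\frac{\alpha}{2}}v=c_{1}R_{\alpha,n}^{-1}u^{\frac{n+\alpha}{n-\alpha}}\geq0$ and letting $R\to\infty$ --- the standard derivation of the equivalent integral equation --- shows that the potential $c_{1}\big(\frac{1}{|x|^{n-\alpha}}\ast u^{\frac{n+\alpha}{n-\alpha}}\big)(x)$ is finite and does not exceed $v(x)$. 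Then $v-c_{1}\big(\frac{1}{|x|^{n-\alpha}}\ast u^{\frac{n+\alpha}{n-\alpha}}\big)$ is a nonnegative $\mathcal{L}_{\alpha}$ function killed by $(-\Delta)^{\frac{\alpha}{2}}$, so the Liouville theorem for the fractional Laplacian (a nonnegative $\alpha$-harmonic function on $\mathbb{R}^{n}$ is constant) gives $v=c_{1}\big(\frac{1}{|x|^{n-\alpha}}\ast u^{\frac{n+\alpha}{n-\alpha}}\big)+C_{0}$ for some $C_{0}\geq0$. Substituting this into the first equation, $u$ becomes a nonnegative classical solution of
\[
(-\Delta)^{\frac{\alpha}{2}}u=c_{1}\Big(\frac{1}{|x|^{n-\alpha}}\ast|u|^{\frac{n+\alpha}{n-\alpha}}\Big)u^{q_{1}}+C_{0}u^{q_{1}}+c_{2}u^{q_{2}}\quad\text{in }\mathbb{R}^{n},
\]
that is, of \eqref{PDEM} with the local part $c_{2}u^{q_{2}}$ replaced by the combined local nonlinearity $C_{0}u^{q_{1}}+c_{2}u^{q_{2}}$, all of whose exponents are $\leq\frac{n+\alpha}{n-\alpha}$.

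Next I would run the classification on this equation. If $C_{0}=0$ it is exactly \eqref{PDEM} and Theorem \ref{Thm1} applies verbatim. If $C_{0}>0$, the direct method of moving spheres behind Theorem \ref{Thm1} still goes through for this three-term equation (a local term $u^{q_{1}}$ is treated just like $u^{q_{2}}$ in the narrow region principle and in the spheres procedure), the relevant ``criticality defect'' now being $c_{1}\big(\frac{2\alpha}{n-\alpha}-q_{1}\big)+C_{0}\big(\frac{n+\alpha}{n-\alpha}-q_{1}\big)+c_{2}\big(\frac{n+\alpha}{n-\alpha}-q_{2}\big)$; since $q_{1}\leq\frac{2\alpha}{n-\alpha}$ forces $\frac{n+\alpha}{n-\alpha}-q_{1}\geq1$, the middle summand is positive, so the defect is positive and hence $u\equiv0$. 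Therefore, in every case either $u\equiv0$, or $C_{0}=0$ and $u(x)=C_{1}\big(\frac{\mu}{1+\mu^{2}|x-x_{0}|^{2}}\big)^{\frac{n-\alpha}{2}}$ with $c_{1}\big(\frac{2\alpha}{n-\alpha}-q_{1}\big)+c_{2}\big(\frac{n+\alpha}{n-\alpha}-q_{2}\big)=0$; the latter is exactly the ``moreover'' assertion.

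It remains to recover $v$. If $u\equiv0$, the second equation reads $(-\Delta)^{\frac{\alpha}{2}}v=0$ with $v\geq0$, so $v\equiv C_{0}$ for some $C_{0}\geq0$ --- the first alternative. If $u=C_{1}U_{\mu,x_{0}}$ with $U_{\mu,x_{0}}(x):=\big(\frac{\mu}{1+\mu^{2}|x-x_{0}|^{2}}\big)^{\frac{n-\alpha}{2}}$ (and $C_{0}=0$), I would invoke the extremal identity for the fractional Sobolev inequality, $(-\Delta)^{\frac{\alpha}{2}}U_{\mu,x_{0}}=\beta_{n,\alpha}U_{\mu,x_{0}}^{\frac{n+\alpha}{n-\alpha}}$ with an explicit $\beta_{n,\alpha}>0$, so that $(-\Delta)^{\frac{\alpha}{2}}\big(v-C_{2}U_{\mu,x_{0}}\big)=0$ with $C_{2}:=c_{1}C_{1}^{\frac{n+\alpha}{n-\alpha}}R_{\alpha,n}^{-1}\beta_{n,\alpha}^{-1}$; since $v=c_{1}\big(\frac{1}{|x|^{n-\alpha}}\ast u^{\frac{n+\alpha}{n-\alpha}}\big)$ is now the potential of a bounded $L^{1}$ density and hence bounded and vanishing at infinity, the same Liouville theorem forces $v=C_{2}U_{\mu,x_{0}}$. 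This produces the forms \eqref{explicit} with $C_{1},C_{2}$ depending only on $n,\alpha,c_{1},c_{2}$ (and $C_{2}>0$ precisely when $c_{1}>0$), completing the proof.

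The step I expect to be the real obstacle is the reduction in the first paragraph: producing rigorously the representation $v=c_{1}\big(\frac{1}{|x|^{n-\alpha}}\ast u^{\frac{n+\alpha}{n-\alpha}}\big)+C_{0}$ (finiteness of the potential and the Liouville theorem for the additive constant), together with the careful verification that the nonlocal narrow region principle and the moving spheres scheme of Theorem \ref{Thm1} are genuinely insensitive to the extra strictly subcritical term $C_{0}u^{q_{1}}$, so that $C_{0}>0$ is incompatible with $u\not\equiv0$. Once this is granted, the bubble identity for $(-\Delta)^{\frac{\alpha}{2}}$ and the evaluation of $C_{1}$ and $C_{2}$ are entirely routine.
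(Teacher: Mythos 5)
Your proposal is correct and follows essentially the same route as the paper: reduce the system to a single scalar equation by solving for $v$ via the integral representation $v=c_{1}\big(\tfrac{1}{|x|^{n-\alpha}}\ast u^{\frac{n+\alpha}{n-\alpha}}\big)+C_{0}$, observe that $C_{0}>0$ injects a strictly subcritical term $C_{0}u^{q_{1}}$ (since $q_{1}\le\frac{2\alpha}{n-\alpha}<\frac{n+\alpha}{n-\alpha}$) so that the moving-spheres nonexistence argument forces $u\equiv0$, and when $C_{0}=0$ apply Theorem \ref{Thm1} to classify $u$ and then recover $v$. The one place you genuinely diverge is the last step: the paper recovers the explicit form of $v$ by plugging the bubble $u$ directly into the convolution $\tfrac{1}{|x|^{n-\alpha}}\ast u^{\frac{n+\alpha}{n-\alpha}}$ and evaluating it via the closed-form identity from Lemma 4.1 of \cite{DFHQW} (the identity \eqref{formula}), whereas you proceed through the PDE: use the extremal identity $(-\Delta)^{\frac{\alpha}{2}}U_{\mu,x_{0}}=\beta_{n,\alpha}U_{\mu,x_{0}}^{\frac{n+\alpha}{n-\alpha}}$ to show $v-C_{2}U_{\mu,x_{0}}$ is $\alpha$-harmonic and bounded, hence constant, and then argue the constant is zero from decay at infinity. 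Both routes are standard and work; the paper's direct convolution computation is slightly shorter since it avoids the Liouville step, but your argument is equally valid and perhaps conceptually cleaner in that it reuses the Liouville tool already needed in the first reduction. You also flag correctly that the tacit extension of Theorem \ref{Thm1} to the three-term nonlinearity $c_{1}(\cdots)u^{q_{1}}+C_{0}u^{q_{1}}+c_{2}u^{q_{2}}$ is the only technical debt in the argument; the paper leaves this to the reader in exactly the same way.
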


The rest of our paper is organized as follows. In Section 2, we will carry out our proof of Theorem \ref{Thm0}. Section 3 and 4 are devoted to proving our Theorem \ref{Thm1} and Corollary \ref{Cor1} respectively.

In the following, we will use $C$ to denote a general positive constant that may depend on $n$, $\alpha$, $c_{1}$, $c_{2}$, $p_{1}$, $p_{2}$, $q_{1}$, $q_{2}$ and $u$, and whose value may differ from line to line.

\section{Proof of Theorem \ref{Thm0}}
In this section, we will use a direct method of moving spheres for nonlocal nonlinearity with the help of Narrow region principle to classify the nonnegative solutions of Schr\"{o}dinger-Hartree equation \eqref{PDEH}.

\subsection{The direct method of moving spheres for nonlocal nonlinearity}

Assume $n\geq2$, $n>2\alpha$, $0<\alpha\leq2$, $c_1,c_2\geq0$ with $c_{1}+c_{2}>0$, $0<p_{1}\leq1$ and $0<p_{2}\leq\frac{n+\alpha}{n-\alpha}$. Suppose $u$ is a nonnegative classical solution of \eqref{PDEH} which is not identically zero. It follows immediately that $u>0$ in $\mathbb{R}^{n}$ and $\int_{\mathbb{R}^{n}}\frac{u^{2}(x)}{|x|^{2\alpha}}dx<+\infty$. Thus we assume $u$ is actually a positive solution from now on.

For arbitrary $x\in\mathbb{R}^n$ and $\lambda>0$, we define the conformal transforms
$$u_{x,\lambda}(y):=\left(\frac{\lambda}{|y-x|}\right)^{n-\alpha}u(y^{x,\lambda}),\ \ \ \ \ \ \forall \,\, y\in\mathbb{R}^n\setminus\{x\},$$
where
$$y^{x,\lambda}=\frac{\lambda^2(y-x)}{|y-x|^2}+x.$$
Then, since $u$ is a positive classical solution of \eqref{PDEH}, one can verify that $u_{x,\lambda}\in\mathcal{L}_{\alpha}(\mathbb{R}^{n})\cap C^{1,1}_{loc}(\mathbb{R}^{n}\setminus\{x\})$ if $0<\alpha<2$ ($u_{x,\lambda}\in C^{2}(\mathbb{R}^{n}\setminus\{x\})$ if $\alpha=2$) and satisfies the integral property $$\int_{\mathbb{R}^{n}}\frac{u_{x,\lambda}^{2}(y)}{\lambda^{2\alpha}}dy=\int_{\mathbb{R}^{n}}\frac{u^{2}(x)}{|x|^{2\alpha}}dx<+\infty$$
and a similar equation as $u$ for any $x\in\mathbb{R}^n$ and $\lambda>0$. In fact, without loss of generality, we may assume $x=0$ for simplicity and get, for $0<\alpha<2$ ($\alpha=2$ is similar),
\begin{eqnarray}\label{2-1-1}
 \nonumber (-\Delta)^{\frac{\alpha}{2}}u_{0,\lambda}(y)&=&C_{\alpha,n}P.V.\int_{\mathbb{R}^n}\frac{\Big(\big(\frac{\lambda}{|y|}\big)^{n-\alpha}
-\big(\frac{\lambda}{|z|}\big)^{n-\alpha}\Big)u\big(\frac{\lambda^2y}{|y|^{2}}\big)+\big(\frac{\lambda}{|z|}\big)^{n-\alpha}
\Big(u\big(\frac{\lambda^2y}{|y|^{2}}\big)-
u\big(\frac{\lambda^2z}{|z|^{2}}\big)\Big)}{|y-z|^{n+\alpha}}dz \\
\nonumber &=& u\Big(\frac{\lambda^2y}{|y|^{2}}\Big)(-\Delta)^{\frac{\alpha}{2}}\left[\left(\frac{\lambda}{|y|}\right)^{n-\alpha}\right]
+C_{\alpha,n}P.V.\int_{\mathbb{R}^{n}}\frac{u\big(\frac{\lambda^2y}{|y|^{2}}\big)-u(z)}{\big|y-\frac{\lambda^2z}
{|z|^{2}}\big|^{n+\alpha}}\frac{\lambda^{n+\alpha}}{|z|^{n+\alpha}}dz\\
\nonumber &=&\frac{\lambda^{n+\alpha}}{|y|^{n+\alpha}}(-\Delta)^{\frac{\alpha}{2}}u\Big(\frac{\lambda^2y}{|y|^{2}}\Big)\\
\nonumber &=&c_1\frac{\lambda^{n+\alpha}}{|y|^{n+\alpha}}
\int_{\mathbb{R}^{n}}\frac{|u(z)|^{2}}{\big|\frac{\lambda^2y}{|y|^{2}}-z\big|^{2\alpha}}dz
  \cdot u^{p_{1}}\Big(\frac{\lambda^2y}{|y|^{2}}\Big)+c_2\frac{\lambda^{n+\alpha}}{|y|^{n+\alpha}}u^{p_{2}}\left(\frac{\lambda^2y}{|y|^2}\right) \\
 \nonumber &=&c_1\frac{\lambda^{n+\alpha}}{|y|^{n+\alpha}}\int_{\mathbb{R}^{n}}\frac{\lambda^{2n}|z|^{-2n}}{\big|\frac{\lambda^2y}
 {|y|^{2}}-\frac{\lambda^2z}{|z|^{2}}\big|^{2\alpha}}
 \Big|u\Big(\frac{\lambda^2z}{|z|^{2}}\Big)\Big|^{2}dz\cdot u^{p_{1}}\Big(\frac{\lambda^2y}{|y|^{2}}\Big)+ c_2\left(\frac{\lambda}{|y|}\right)^{\tau_{2}}u_{0,\lambda}^{p_{2}}(y)\\
 \nonumber &=& c_1\left(\frac{\lambda}{|y|}\right)^{\tau_{1}}\bigg[\frac{1}{|\cdot|^{2\alpha}}\ast|u_{0,\lambda}|^{2}\bigg](y)u^{p_{1}}_{0,\lambda}(y)+ c_2\left(\frac{\lambda}{|y|}\right)^{\tau_{2}}u_{0,\lambda}^{p_{2}}(y),
\end{eqnarray}
this means, the conformal transforms $u_{x,\lambda}\in\mathcal{L}_{\alpha}(\mathbb{R}^{n})\cap C^{1,1}_{loc}(\mathbb{R}^{n}\setminus\{x\})$ ($u_{x,\lambda}\in C^{2}(\mathbb{R}^{n}\setminus\{x\})$ if $\alpha=2$) satisfies
\begin{equation}\label{2-1-2}
   (-\Delta)^{\frac{\alpha}{2}}u_{x,\lambda}(y)=c_1\left(\frac{\lambda}{|y-x|}\right)^{\tau_{1}}\bigg(\frac{1}{|\cdot|^{2\alpha}}\ast u_{x,\lambda}^{2}\bigg)u^{p_{1}}_{x,\lambda}(y)+ c_2\left(\frac{\lambda}{|y-x|}\right)^{\tau_{2}}u_{x,\lambda}^{p_{2}}(y)
\end{equation}
for every $y\in\mathbb{R}^{n}\setminus\{x\}$, where $\tau_{1}:=(n-\alpha)(1-p_{1})\geq0$ and $\tau_{2}:=(n+\alpha)-p_{2}(n-\alpha)\geq0$. For any $\lambda>0$, we denote
$$B_\lambda(x):=\{y\in \mathbb{R}^n\,|\,|y-x|<\lambda\},$$
and define
$$P(y):=\bigg(\frac{1}{|\cdot|^{2\alpha}}\ast u^2\bigg)(y), \qquad \widetilde{P}_{x,\lambda}(y):=\int_{B_{\lambda}(x)}\frac{u(z)}{|y-z|^{2\alpha}}dz.$$

Let $\omega_{x,\lambda}(y)=u_{x,\lambda}(y)-u(y)$ for any $y\in B_{\lambda}(x)\setminus\{x\}$. By the definition of $u_{x,\lambda}$ and $\omega_{x,\lambda}$, we have
\begin{eqnarray}\label{2-0}
  \omega_{x,\lambda}(y)&=&u_{x,\lambda}(y)-u(y)=\left(\frac{\lambda}{|y-x|}\right)^{n-\alpha}u(y^{x,\lambda})-u(y) \\
 \nonumber &=&\left(\frac{\lambda}{|y-x|}\right)^{n-\alpha}\left(u(y^{x,\lambda})-\left(\frac{\lambda}{|y^{x,\lambda}-x|}\right)^{n-\alpha}u\big((y^{x,\lambda})^{x,\lambda}\big)\right) \\
 \nonumber &=&-\left(\frac{\lambda}{|y-x|}\right)^{n-\alpha}\omega_{x,\lambda}(y^{x,\lambda})=-\big(\omega_{x,\lambda}\big)_{x,\lambda}(y)
\end{eqnarray}
for every $y\in B_{\lambda}(x)\setminus\{x\}$.

We will first show that there exists a $\epsilon_{0}>0$ (depending on $x$) sufficiently small such that, for any $0<\lambda\leq\epsilon_{0}$, it holds that $\omega_{x,\lambda}(y)\geq 0$ for every $y\in B_\lambda(x)\setminus\{x\}$.

We first need to show that the nonnegative solution $u$ to \eqref{PDEH} also satisfies the following equivalent integral equation
\begin{equation}\label{IE}
  u(y)=\int_{\mathbb{R}^{n}}\frac{c_1R_{\alpha,n}}{|y-z|^{n-\alpha}}\bigg(\int_{\mathbb{R}^{n}}\frac{|u(\xi)|^{2}}{|z-\xi|^{2\alpha}}d\xi\bigg)u^{p_{1}}(z)dz
  +\int_{\mathbb{R}^{n}}\frac{c_2R_{\alpha,n}}{|y-z|^{n-\alpha}}u^{p_{2}}(z)dz,
\end{equation}
where the Riesz potential's constants $R_{\alpha,n}:=\frac{\Gamma\big(\frac{n-\alpha}{2}\big)}{\pi^{\frac{n}{2}}2^{\alpha}\Gamma(\frac{\alpha}{2})}$ (see \cite{Stein}).
\begin{lem}\label{equivalent}
Suppose $u$ is a nonnegative solution to \eqref{PDEH}, then $u$ also satisfies the equivalent integral equation \eqref{IE}, and vice versa.
\end{lem}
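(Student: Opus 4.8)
The plan is to prove the two implications separately, the substantive one being that a nonnegative solution $u$ of the PDE \eqref{PDEH} also solves the integral equation \eqref{IE}. First I would record that, since $u\not\equiv 0$, the strong maximum principle for $(-\Delta)^{\frac{\alpha}{2}}$ gives $u>0$ everywhere, and integrating the equation (or testing against a suitable positive function) forces the finiteness $\int_{\mathbb{R}^n}\frac{u^2(x)}{|x|^{2\alpha}}\,dx<\infty$, which is exactly what makes the right-hand side of \eqref{PDEH} locally integrable and the Riesz potentials on the right of \eqref{IE} well-defined (at least as elements of $[0,\infty]$). Write $f(x):=c_1\big(\frac{1}{|x|^{2\alpha}}\ast|u|^2\big)u^{p_1}(x)+c_2 u^{p_2}(x)\ge 0$, so that $(-\Delta)^{\frac{\alpha}{2}}u=f$ with $f\in L^1_{loc}$, $f\ge 0$.

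The standard device is to compare $u$ with the Riesz potential $v(y):=R_{\alpha,n}\int_{\mathbb{R}^n}\frac{f(z)}{|y-z|^{n-\alpha}}\,dz$ on large balls. For $R>0$ let $v_R$ solve the Dirichlet problem $(-\Delta)^{\frac{\alpha}{2}}v_R=f$ in $B_R(0)$, $v_R=0$ in $B_R(0)^c$, given by the Green representation $v_R(y)=\int_{B_R(0)}G_R(y,z)f(z)\,dz$ with $G_R\ge 0$ the Green function of the ball. Since $G_R$ increases to the full-space kernel $R_{\alpha,n}|y-z|^{-(n-\alpha)}$ as $R\to\infty$, monotone convergence yields $v_R(y)\uparrow v(y)$ for every fixed $y$. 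On the other hand $w_R:=u-v_R$ is $\alpha$-harmonic in $B_R(0)$ and satisfies $w_R=u\ge 0$ outside $B_R(0)$, hence by the maximum principle $w_R\ge 0$, i.e. $u\ge v_R$ in $\mathbb{R}^n$; letting $R\to\infty$ gives $u\ge v$, and in particular $v<\infty$ so $v$ is a genuine (finite, continuous) function. It remains to show $u\equiv v$. Set $h:=u-v\ge 0$; then $(-\Delta)^{\frac{\alpha}{2}}h=0$ in $\mathbb{R}^n$ (in the distributional/viscosity sense appropriate to $u\in C^{1,1}_{loc}\cap\mathcal{L}_\alpha$), so $h$ is a nonnegative $\alpha$-harmonic function on all of $\mathbb{R}^n$; by the Liouville theorem for the fractional Laplacian $h$ is a nonnegative constant $c\ge 0$. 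Finally I would rule out $c>0$: if $h\equiv c>0$ then $u\ge c$ on $\mathbb{R}^n$, which forces $f(z)\ge c_2 c^{p_2}$ (or the nonlocal term stays bounded below on a ball, using $u\ge c$), and then $v(y)=R_{\alpha,n}\int\frac{f(z)}{|y-z|^{n-\alpha}}dz$ is already $+\infty$ whenever $0<\alpha<n$ (the tail $\int_{|z|\ge 1}|z|^{-(n-\alpha)}dz$ diverges), contradicting $v<\infty$; hence $c=0$ and $u\equiv v$, which is precisely \eqref{IE}. The converse direction is the routine one: given $u$ solving \eqref{IE} with the stated integrability, one applies $(-\Delta)^{\frac{\alpha}{2}}$ to the Riesz potential using $(-\Delta)^{\frac{\alpha}{2}}\big(R_{\alpha,n}|\cdot|^{-(n-\alpha)}\big)=\delta_0$ and the regularity of $u$ to recover \eqref{PDEH}, together with $u\ge 0$ which is manifest from the nonnegativity of the kernels and of the integrands.

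The main obstacle is the justification of the two "soft" analytic facts in the chain above on the low-regularity class $C^{1,1}_{loc}\cap\mathcal{L}_\alpha$ (rather than for Schwartz functions): namely the maximum principle comparison $u\ge v_R$ and the Liouville theorem for nonnegative $\alpha$-harmonic functions on $\mathbb{R}^n$. Both are available in the literature on the direct method for fractional Laplacians (e.g. \cite{CLL,CLZ}), and the decay/integrability condition $\int \frac{u^2}{|x|^{2\alpha}}<\infty$ is what guarantees all the potential integrals converge, so assembling them is mostly bookkeeping; I would simply cite the appropriate maximum principle and Liouville statements and check that the hypotheses $u\in C^{1,1}_{loc}\cap\mathcal{L}_\alpha$, $f\ge 0$, $f\in L^1_{loc}$ are met.
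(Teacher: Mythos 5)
The paper does not actually write out a proof of Lemma~\ref{equivalent}; it only says ``The proof of Lemma~\ref{equivalent} is similar to \cite{CFY,DFQ,ZCCY}, so we omit the details here.'' Your proposal reconstructs precisely the comparison argument used in those references (and in \cite{CLL,CLZ}): solve the Dirichlet problem on $B_R$, use positivity of the Green function and the maximum principle to get $u\ge v_R$, let $R\to\infty$ by monotone convergence to get $u\ge v<\infty$, then conclude that $h=u-v\ge 0$ is $\alpha$-harmonic on $\mathbb{R}^n$ and hence a nonnegative constant by the fractional Liouville theorem, which is finally ruled out from the integrability constraints. So this is the intended route, and it is essentially correct.

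One small point worth tightening: in the step ruling out $h\equiv c>0$ you write ``which forces $f(z)\ge c_2c^{p_2}$,'' but this is vacuous when $c_2=0$. The parenthetical about the nonlocal term is the right instinct, but the cleanest version is: if $u\ge c>0$ everywhere, then since $n>2\alpha$ one has $\int_{\mathbb{R}^n}\frac{u^2(z)}{|z|^{2\alpha}}\,dz\ge c^2\int_{\mathbb{R}^n}\frac{dz}{|z|^{2\alpha}}=+\infty$, directly contradicting the finiteness $\int_{\mathbb{R}^n}\frac{u^2}{|x|^{2\alpha}}\,dx<\infty$ recorded at the outset (equivalently, the convolution $\frac{1}{|\cdot|^{2\alpha}}\ast u^2$ would be identically $+\infty$ and the right-hand side of \eqref{PDEH} would not make sense). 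With that phrasing the case split on $c_1,c_2$ becomes unnecessary. The remaining citations to maximum principles and the Liouville theorem on the class $C^{1,1}_{loc}\cap\mathcal{L}_\alpha$ are exactly the bookkeeping the paper itself is outsourcing to \cite{CFY,DFQ,ZCCY}, so your level of detail is appropriate.
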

The proof of Lemma \ref{equivalent} is similar to \cite{CFY,DFQ,ZCCY}, so we omit the details here.

Based on Lemma \ref{equivalent}, we can prove that $\omega_{x,\lambda}$ has a strictly positive lower bound in a small neighborhood of $x$.
\begin{lem}\label{strictlypositive}
For each fixed $x\in\mathbb{R}^{n}$, there exists a $\eta_{0}>0$ (depending on $x$) sufficiently small such that, if $0<\lambda\leq\eta_{0}$, then
$$\omega_{x,\lambda}(y)\geq1,\ \ \   y\in \overline{B_{\lambda^{2}}(x)}\setminus\{x\}.$$
\end{lem}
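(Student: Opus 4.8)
The plan is to exploit the integral representation \eqref{IE} to get a lower bound on $u$ near the point $x$, and an upper bound on $u$ at infinity, and then combine these two estimates through the conformal transform to bootstrap a uniform lower bound on $\omega_{x,\lambda}$ in the ball $\overline{B_{\lambda^2}(x)}\setminus\{x\}$ for $\lambda$ small. After translation we may take $x=0$. First I would observe that since $u$ is positive and continuous, it is bounded below by a positive constant on any fixed ball, say $u\geq m_0>0$ on $\overline{B_2(0)}$; moreover, from the right-hand side of \eqref{IE} and the positivity of the Riesz kernel one sees that the convolution term $P(z)=\big(\tfrac{1}{|\cdot|^{2\alpha}}\ast u^2\big)(z)$ is finite and positive for every $z$ (using $\int_{\mathbb R^n}\tfrac{u^2}{|x|^{2\alpha}}dx<\infty$, which was already noted), and that the two integrals defining $u(y)$ in \eqref{IE} each converge. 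In particular \eqref{IE} gives, for $|y|$ large,
\begin{equation*}
u(y)\;\leq\;\frac{C}{|y|^{n-\alpha}},
\end{equation*}
because for $|z|\leq |y|/2$ one has $|y-z|\geq |y|/2$ and the remaining integral over $\{|z|\leq|y|/2\}$ is bounded by a constant (finiteness of the two integrals above), while the contribution of $\{|z|\geq|y|/2\}$ is handled by the tail decay coming from $u\in\mathcal L_\alpha$ together with $p_1\le 1$, $p_2\le\frac{n+\alpha}{n-\alpha}$; this is the standard asymptotic-upper-bound computation from \cite{CFY,DFQ,ZCCY} and I would only sketch it.

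Next I would turn these two facts into an estimate for $\omega_{0,\lambda}$. Fix $y\in \overline{B_{\lambda^2}(0)}\setminus\{0\}$ with $0<\lambda\le 1$ small; then $|y|\le\lambda^2$, so $|y^{0,\lambda}|=\lambda^2/|y|\ge 1$, and in fact $|y^{0,\lambda}|$ is large when $\lambda$ is small (indeed $|y^{0,\lambda}|\ge 1/\lambda^0$... more precisely $|y^{0,\lambda}|\ge\lambda^2/|y|$, and since $|y|\le\lambda^2$ we only get $|y^{0,\lambda}|\ge 1$; to make $y^{0,\lambda}$ genuinely far out I instead split: either $|y|\le\lambda^{5/2}$, giving $|y^{0,\lambda}|\ge\lambda^{-1/2}\to\infty$, or $\lambda^{5/2}<|y|\le\lambda^2$, giving $1\le|y^{0,\lambda}|\le\lambda^{-1/2}$, i.e. $y^{0,\lambda}$ lies in a fixed annulus where $u\ge m_0$). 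In the first regime I use the upper bound: $u(y^{0,\lambda})>0$ and
\begin{equation*}
u_{0,\lambda}(y)=\Big(\frac{\lambda}{|y|}\Big)^{n-\alpha}u(y^{0,\lambda})\;\geq\;\Big(\frac{\lambda}{|y|}\Big)^{n-\alpha}\cdot\frac{c}{|y^{0,\lambda}|^{n-\alpha}}\cdot\mathbf{1}_{?}
\end{equation*}
— here I must be careful: I want a \emph{lower} bound on $u(y^{0,\lambda})$, not an upper one. The cleaner route is to use \eqref{IE} directly to bound $u(y^{0,\lambda})$ \emph{below}: from \eqref{IE}, $u(w)\ge c_2 R_{\alpha,n}\int_{B_1(0)}\frac{u^{p_2}(z)}{|w-z|^{n-\alpha}}dz + c_1 R_{\alpha,n}\int_{B_1(0)}\frac{P(z)u^{p_1}(z)}{|w-z|^{n-\alpha}}dz \ge \frac{c}{(1+|w|)^{n-\alpha}}$ for all $w$, since on $B_1(0)$ both $u$ and $P$ are bounded below by positive constants and $|w-z|\le 1+|w|$ there; this uses $c_1+c_2>0$. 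Hence $u(y^{0,\lambda})\ge c\,|y^{0,\lambda}|^{-(n-\alpha)}=c\,|y|^{n-\alpha}\lambda^{-2(n-\alpha)}$ for $|y^{0,\lambda}|\ge 1$, and therefore
\begin{equation*}
u_{0,\lambda}(y)=\Big(\frac{\lambda}{|y|}\Big)^{n-\alpha}u(y^{0,\lambda})\;\geq\;c\,\lambda^{n-\alpha}\cdot\frac{|y|^{n-\alpha}\lambda^{-2(n-\alpha)}}{|y|^{n-\alpha}}=c\,\lambda^{-(n-\alpha)}\;\longrightarrow\;\infty
\end{equation*}
as $\lambda\to 0^+$, uniformly in $y\in\overline{B_{\lambda^2}(0)}\setminus\{0\}$. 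Meanwhile $u$ itself is bounded on the fixed ball $\overline{B_1(0)}$ (it is continuous), so $u(y)\le M$ there, in particular for $|y|\le\lambda^2\le 1$. Choosing $\eta_0>0$ so small that $c\,\eta_0^{-(n-\alpha)}\ge M+1$ gives $\omega_{0,\lambda}(y)=u_{0,\lambda}(y)-u(y)\ge 1$ for all $0<\lambda\le\eta_0$ and all such $y$, which is the claim; translating back replaces $0$ by $x$ and $\eta_0$ by an $\eta_0$ depending on $x$.

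The main obstacle is making the lower bound $u(w)\ge c\,(1+|w|)^{-(n-\alpha)}$ for \emph{all} $w\in\mathbb R^n$ rigorous and uniform: one must verify that the integral representation \eqref{IE} really yields this (the estimate $|w-z|\le 1+|w|$ for $z\in B_1(0)$ is trivial, but one needs $P$ and $u$ bounded below on $B_1(0)$, which follows from positivity and continuity of $u$ together with the pointwise positivity and lower semicontinuity of $P$), and that in the transformed variable the blow-up rate $\lambda^{-(n-\alpha)}$ genuinely dominates the fixed bound $M$ on $u$ near $x$. A minor technical point is the case distinction above — in fact it is unnecessary once the global lower bound on $u$ is in hand, since the bound $u(y^{0,\lambda})\ge c|y^{0,\lambda}|^{-(n-\alpha)}$ holds for all $|y^{0,\lambda}|\ge 1$, covering the whole range $|y|\le\lambda^2$. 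The remaining ingredients (finiteness of the integrals in \eqref{IE}, continuity of $u$, $\int u^2/|x|^{2\alpha}<\infty$) are all either given or already established, so the proof reduces to these elementary estimates.
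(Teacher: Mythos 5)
Your proposal is essentially the paper's own argument: derive the far-field lower bound $u(w)\geq C|w|^{-(n-\alpha)}$ for $|w|\geq 1$ by restricting the integral in \eqref{IE} to a fixed small ball about the origin (where $u$ and $P$ are bounded below by positivity and continuity), apply this to $y^{x,\lambda}$ — which has $|y^{x,\lambda}|\geq 1$ whenever $|y-x|\leq\lambda^2\leq\lambda\leq 1$ — to obtain $u_{x,\lambda}(y)\geq C/\lambda^{n-\alpha}$, and subtract the bounded $u$ on $\overline{B_{\lambda^2}(x)}$. Your detours (the initial upper-bound sketch, the case split between $|y|\le\lambda^{5/2}$ and $\lambda^{5/2}<|y|\le\lambda^2$) are, as you yourself note, unnecessary once the global lower bound is in hand; trimming them recovers the paper's proof almost line for line.
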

\begin{proof}
We will prove Lemma \ref{strictlypositive} using the idea from \cite{CLZ}. Define $$f(u(y)):=c_1u^{p_{1}}(y)\int_{\mathbb{R}^{n}}\frac{u^{2}(z)}{|y-z|^{2\alpha}}dz+c_2u^{p_{2}}(y).$$
For any $|y|\geq1$, since $u>0$ also satisfy the integral equation \eqref{IE}, we can deduce that
\begin{equation}\label{a0}\begin{split}
u(y)&=R_{\alpha,n}\int_{\mathbb{R}^{n}}\frac{f(u(z))}{|y-z|^{n-\alpha}}dz\\
&\geq R_{\alpha,n}\int_{B_{\frac{1}{2}}(0)}\frac{f(u(z))}{|y-z|^{n-\alpha}}dz\\
&\geq \frac{C}{|y|^{n-\alpha}}\int_{B_{\frac{1}{2}}(0)}f(u(z))dz\\
&\geq \frac{C}{|y|^{n-\alpha}}.
\end{split}\end{equation}
It follows immediately that
\begin{equation}\label{a1}
  u_{x,\lambda}(y)=\left(\frac{\lambda}{|y-x|}\right)^{n-\alpha}u(y^{x,\lambda})\geq \left(\frac{\lambda}{|y-x|}\right)^{n-\alpha}\frac{C}{|y^{x,\lambda}|^{n-\alpha}}
=\frac{C}{\lambda^{n-\alpha}}
\end{equation}
for all $y\in\overline{B_{\lambda^{2}}(x)}\setminus\{x\}$. Therefore, we have if $0<\lambda\leq\eta_{0}$ for some $\eta_{0}(x)>0$ small enough, then
$$\omega_{x,\lambda}(y)=u_{x,\lambda}(y)-u(y)\geq \frac{C}{\lambda^{n-\alpha}}-\max_{|y-x|\leq\lambda^{2}}u(y)\geq1$$
for any $y\in \overline{B_{\lambda^{2}}(x)}\setminus\{x\}$, this finishes the proof of Lemma \ref{strictlypositive}.
\end{proof}

For every fixed $x\in \mathbb{R}^n$, define
\begin{equation*}
 B_\lambda^-=\{y\in B_\lambda(x)\setminus \{x\}\,|\,\omega_{x,\lambda}(y)<0\}.
\end{equation*}

Now we need the following theorem, which is a variant (for nonlocal nonlinearity) of the \emph{Narrow region principle} (Theorem 2.2 in \cite{CLZ}). \begin{thm}\label{Thm2}(Narrow region principle)
Assume $x\in\mathbb{R}^{n}$ is arbitrarily fixed. Let $\Omega$ be a narrow region in $B_{\lambda}(x)\setminus\{x\}$ with small thickness $0<l<\lambda$ such that $\Omega\subseteq A_{\lambda,l}(x):=\{y\in\mathbb{R}^n|\,\lambda-l<|y-x|<\lambda\}$. Suppose $\omega_{x,\lambda}\in\mathcal{L}_{\alpha}(\mathbb{R}^{n})\cap C^{1,1}_{loc}(\Omega)$ if $0<\alpha<2$ ($\omega_{x,\lambda}\in C^{2}(\Omega)$ if $\alpha=2$) and satisfies
\begin{equation}\label{nrp}\\\begin{cases}
(-\Delta)^{\frac{\alpha}{2}}\omega_{x,\lambda}(y)-\mathcal{L}_{x,\lambda}(y)\omega_{x,\lambda}(y)-2c_1\int_{B^{-}_{\lambda}}\frac{u(z)\omega_{x,\lambda}(z)}{|y-z|^{2\alpha}}dz \, u^{p_{1}}(y)\geq0 \,\,\,\,\, \text{in} \,\,\, \Omega\cap B^{-}_{\lambda},\\
\text{negative minimum of} \,\, \omega_{x,\lambda}\,\, \text{is attained in the interior of}\,\, B_{\lambda}(x)\setminus\{x\} \,\,\text{if} \,\,\, B^{-}_{\lambda}\neq\emptyset,\\
\text{negative minimum of} \,\,\, \omega_{x,\lambda} \,\,\, \text{cannot be attained in} \,\,\, (B_{\lambda}(x)\setminus\{x\})\setminus\Omega,
\end{cases}\end{equation}
where $\mathcal{L}_{x,\lambda}(y):=c_{1}p_{1}P(y)u_{x,\lambda}^{p_{1}-1}(y)+c_{2}p_{2}\max\left\{u^{p_{2}-1}(y),u_{x,\lambda}^{p_{2}-1}(y)\right\}$. Then, we have \\
(i) there exists a sufficiently small constant $\delta_{0}(x)>0$, such that, for all $0<\lambda\leq\delta_{0}$,
\begin{equation}\label{nrp1}
  \omega_{x,\lambda}(y)\geq0, \,\,\,\,\,\, \forall \,y\in\Omega;
\end{equation}
(ii) there exists a sufficiently small $l_{0}(x,\lambda)>0$ depending on $\lambda$ continuously, such that, for all $0<l\leq l_{0}$,
\begin{equation}\label{nrp2}
  \omega_{x,\lambda}(y)\geq0, \,\,\,\,\,\, \forall \,y\in\Omega.
\end{equation}
\end{thm}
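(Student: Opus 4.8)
The plan is to argue by contradiction, following closely the scheme of the \emph{Narrow region principle} of \cite{CLZ}; the only genuinely new ingredient is a pointwise estimate at the minimum point that converts the nonlocal term in \eqref{nrp} into a bounded zeroth-order coefficient. Suppose $B_{\lambda}^{-}\neq\emptyset$. By the second and third conditions in \eqref{nrp}, the negative minimum of $\omega_{x,\lambda}$ over $B_{\lambda}(x)\setminus\{x\}$ is attained at some interior point $\bar{y}\in\Omega\cap B_{\lambda}^{-}$; as $\bar{y}\in\Omega$, the regularity hypothesis gives $\omega_{x,\lambda}\in C^{1,1}_{loc}$ near $\bar{y}$ (resp.\ $C^{2}$ if $\alpha=2$) and $\omega_{x,\lambda}\in\mathcal{L}_{\alpha}(\mathbb{R}^{n})$, so $(-\Delta)^{\frac{\alpha}{2}}\omega_{x,\lambda}(\bar{y})$ is given by a convergent principal-value integral.

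\emph{Step 1: an upper bound for $(-\Delta)^{\frac{\alpha}{2}}\omega_{x,\lambda}(\bar{y})$.} Splitting $(-\Delta)^{\frac{\alpha}{2}}\omega_{x,\lambda}(\bar{y})=C_{\alpha,n}\int_{\mathbb{R}^{n}}\frac{\omega_{x,\lambda}(\bar{y})-\omega_{x,\lambda}(z)}{|\bar{y}-z|^{n+\alpha}}\,dz$ into the integrals over $B_{\lambda}(x)$ and over $(B_{\lambda}(x))^{c}$, the former is nonpositive since $\bar{y}$ minimizes $\omega_{x,\lambda}$ on $B_{\lambda}(x)$ (the integrand being locally integrable near $\bar{y}$ because $\omega_{x,\lambda}\in C^{1,1}$ there). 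For the latter we use the inversion $z\mapsto z^{x,\lambda}$, the anti-symmetry $\omega_{x,\lambda}(z^{x,\lambda})=-(|z-x|/\lambda)^{n-\alpha}\omega_{x,\lambda}(z)$ (a consequence of \eqref{2-0}), the elementary inequality $|\bar{y}-z|^{-(n+\alpha)}\geq\lambda^{n+\alpha}|z-x|^{-(n+\alpha)}|\bar{y}-z^{x,\lambda}|^{-(n+\alpha)}$ for $\bar{y},z\in B_{\lambda}(x)$ (which reduces to $(|z-x|^{2}-\lambda^{2})(|\bar{y}-x|^{2}-\lambda^{2})\geq0$), together with $\omega_{x,\lambda}\geq0$ on $B_{\lambda}(x)\setminus B_{\lambda}^{-}$, to obtain, exactly as in \cite{CLL,CLZ},
\[
(-\Delta)^{\frac{\alpha}{2}}\omega_{x,\lambda}(\bar{y})\ \leq\ C_{\alpha,n}\,\omega_{x,\lambda}(\bar{y})\int_{(B_{\lambda}(x))^{c}}\frac{dz}{|\bar{y}-z|^{n+\alpha}}.
\]
Since $\bar{y}\in\Omega\subseteq A_{\lambda,l}(x)$ sits at distance $\lambda-|\bar{y}-x|<l<\lambda$ from $(B_{\lambda}(x))^{c}$, a scaling argument gives $\int_{(B_{\lambda}(x))^{c}}|\bar{y}-z|^{-(n+\alpha)}\,dz\geq c_{0}(\lambda-|\bar{y}-x|)^{-\alpha}\geq c_{0}\max\{l^{-\alpha},\lambda^{-\alpha}\}$ with $c_{0}=c_{0}(n,\alpha)>0$.

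\emph{Step 2: absorbing the nonlocal term.} Evaluating \eqref{nrp} at $\bar{y}\in\Omega\cap B_{\lambda}^{-}$ yields
\[
(-\Delta)^{\frac{\alpha}{2}}\omega_{x,\lambda}(\bar{y})\ \geq\ \mathcal{L}(\bar{y})\,\omega_{x,\lambda}(\bar{y})+2c_{1}\Big(\int_{B_{\lambda}^{-}}\frac{u(z)\,\omega_{x,\lambda}(z)}{|\bar{y}-z|^{2\alpha}}\,dz\Big)u^{p_{1}}(\bar{y}).
\]
On $B_{\lambda}^{-}$ we have $\omega_{x,\lambda}(\bar{y})\leq\omega_{x,\lambda}(z)<0$ and $u(z)|\bar{y}-z|^{-2\alpha}\geq0$, hence $\int_{B_{\lambda}^{-}}\frac{u(z)\omega_{x,\lambda}(z)}{|\bar{y}-z|^{2\alpha}}\,dz\geq\omega_{x,\lambda}(\bar{y})\int_{B_{\lambda}^{-}}\frac{u(z)}{|\bar{y}-z|^{2\alpha}}\,dz\geq\omega_{x,\lambda}(\bar{y})\,\widetilde{P}_{x,\lambda}(\bar{y})$, using $B_{\lambda}^{-}\subseteq B_{\lambda}(x)$ and $\omega_{x,\lambda}(\bar{y})<0$. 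Setting $\widetilde{c}(y):=\mathcal{L}(y)+2c_{1}u^{p_{1}}(y)\widetilde{P}_{x,\lambda}(y)\geq0$, this becomes $(-\Delta)^{\frac{\alpha}{2}}\omega_{x,\lambda}(\bar{y})\geq\widetilde{c}(\bar{y})\,\omega_{x,\lambda}(\bar{y})$. The crucial point is that $\widetilde{c}$ is bounded on $B_{\lambda}(x)$ by a constant $C_{\ast}$ (which may be chosen uniform for $\lambda$ in any bounded interval): the powers $u^{p_{1}},u^{p_{1}-1},u^{p_{2}-1}$ are bounded on $B_{\lambda}(x)$ since $0<u\in C(B_{\lambda}(x))$; $\widetilde{P}_{x,\lambda}(y)\leq\big(\sup_{B_{1}(x)}u\big)\int_{B_{\lambda}(x)}|y-z|^{-2\alpha}\,dz\leq C\lambda^{n-2\alpha}$ because $2\alpha<n$; and $P(y)=\big(|\cdot|^{-2\alpha}\ast u^{2}\big)(y)$ is bounded on $B_{1}(x)$ because $u\in L^{\infty}_{loc}(\mathbb{R}^{n})$, $2\alpha<n$, and $\int_{\mathbb{R}^{n}}u^{2}(x)|x|^{-2\alpha}\,dx<\infty$ (noted at the beginning of this section). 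As $\omega_{x,\lambda}(\bar{y})<0$ and $0\leq\widetilde{c}(\bar{y})\leq C_{\ast}$, we conclude $(-\Delta)^{\frac{\alpha}{2}}\omega_{x,\lambda}(\bar{y})\geq C_{\ast}\,\omega_{x,\lambda}(\bar{y})$.

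\emph{Step 3: conclusion and the main obstacle.} Combining Steps 1 and 2 and dividing by $\omega_{x,\lambda}(\bar{y})<0$ (which reverses the inequality) forces $C_{\alpha,n}c_{0}\,\lambda^{-\alpha}\leq C_{\ast}$ in case (i), a contradiction once $\lambda\leq\delta_{0}(x)$ is small enough, and $C_{\alpha,n}c_{0}\,l^{-\alpha}\leq C_{\ast}$ in case (ii), a contradiction once $0<l\leq l_{0}(x,\lambda)$ is small enough, with $l_{0}$ depending continuously on $\lambda$ through $C_{\ast}$. Hence $B_{\lambda}^{-}=\emptyset$, i.e.\ $\omega_{x,\lambda}\geq0$ on $\Omega$, which is \eqref{nrp1} and \eqref{nrp2}. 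The hard part, and the only real departure from the local-nonlinearity argument of \cite{CLL,CLZ}, is Step 2: the convolution $\int_{B_{\lambda}^{-}}u(z)\omega_{x,\lambda}(z)|\bar{y}-z|^{-2\alpha}\,dz$ always carries $\omega_{x,\lambda}$ inside and cannot be rewritten pointwise as $c(\bar{y})\omega_{x,\lambda}(\bar{y})$, so one exploits that the estimate is needed only at the minimizer $\bar{y}$, where $\omega_{x,\lambda}(z)\geq\omega_{x,\lambda}(\bar{y})$ allows one to pull $\omega_{x,\lambda}(\bar{y})$ out and bound the remaining integral by $\widetilde{P}_{x,\lambda}(\bar{y})$; the other delicate point is the uniform boundedness of $\widetilde{c}$ (in particular the finiteness of $P$, which is where the standing hypotheses $n>2\alpha$ and $\int u^{2}|x|^{-2\alpha}\,dx<\infty$ enter), whereas the estimate of Step 1 is routine once the inversion identities are in place.
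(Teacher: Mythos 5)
Your proof follows the paper's argument essentially verbatim for $0<\alpha<2$, and you have correctly identified the key point: at the global minimizer $\bar{y}$ of $\omega_{x,\lambda}$ one can use $\omega_{x,\lambda}(z)\geq\omega_{x,\lambda}(\bar{y})$ on $B_{\lambda}^{-}$ to pull $\omega_{x,\lambda}(\bar{y})$ out of the convolution and absorb the nonlocal nonlinearity into a bounded zeroth-order coefficient $\widetilde{c}=\mathcal{L}+2c_{1}u^{p_{1}}\widetilde{P}_{x,\lambda}$ (the paper's $c_{x,\lambda}$). The exterior-integral lower bound and the final contradiction are also as in the paper.

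There is, however, a real gap: the theorem covers $\alpha=2$, but your Step 1 rests entirely on the singular-integral representation $(-\Delta)^{\alpha/2}\omega(\bar{y})=C_{\alpha,n}\,\mathrm{P.V.}\!\int_{\mathbb{R}^n}\frac{\omega(\bar{y})-\omega(z)}{|\bar{y}-z|^{n+\alpha}}\,dz$, which does not exist when $\alpha=2$. For the classical Laplacian, the interior-minimum condition gives only $-\Delta\omega(\bar{y})\leq0$, which combined with Step 2 yields the tautology $0\leq -\widetilde{c}(\bar{y})\omega(\bar{y})$ and no contradiction. The paper handles $\alpha=2$ with a separate barrier argument: taking $\phi(y)=\cos\frac{|y-x|-\lambda+l}{l}$ (so that $\phi\in[\cos 1,1]$ and $-\Delta\phi/\phi\geq 1/l^2$ on $A_{\lambda,l}$) and evaluating $-\Delta\omega$ at the minimizer $y_{0}$ of $\omega/\phi$ rather than of $\omega$, one obtains the quantitative bound $-\Delta\omega(y_{0})\leq\frac{1}{l^2}\omega(y_{0})$, after which the remainder of the argument goes through (with the minor observation that $\omega(z)\geq\overline{\omega}(y_{0})\geq\omega(y_{0})/\cos 1$ replaces the exact minimizer property in the absorption step, costing only a fixed constant). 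Your mention of ``resp.\ $C^{2}$ if $\alpha=2$'' acknowledges the case but your argument does not actually cover it; without the barrier construction, the proof is incomplete for $\alpha=2$.
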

\begin{proof}
Without loss of generality, we may assume $x=0$ here for simplicity. Suppose on contrary that \eqref{nrp1} and \eqref{nrp2} do not hold, we will obtain a contradiction for any $0<\lambda<\delta_{0}$ with constant $\delta_{0}$ small enough and any $0<l\leq l_{0}(\lambda)$ with $l_{0}(\lambda)$ sufficiently small respectively. By \eqref{nrp} and our hypothesis, there exists $\tilde{y}\in(\Omega\cap B^{-}_{\lambda})\subseteq A_{\lambda,l}(0):=\{y\in\mathbb{R}^{n}|\,\lambda-l<|y|<\lambda\}$ such that
\begin{equation}\label{nrp-1}
  \omega_{0,\lambda}(\tilde{y})=\min_{B_{\lambda}(0)\setminus\{0\}}\omega_{0,\lambda}(y)<0.
\end{equation}

We first consider the cases $0<\alpha<2$. Let $\tilde{\omega}_{0,\lambda}(y)=\omega_{0,\lambda}(y)-\omega_{0,\lambda}(\tilde{y})$, then $\tilde{\omega}_{0,\lambda}(\tilde{y})=0$ and
$$(-\Delta)^{\alpha/2}\tilde{\omega}_{0,\lambda}(y)=(-\Delta)^{\alpha/2}\omega_{0,\lambda}(y).$$
By the anti-symmetry property $\omega_{x,\lambda}(y)=-(\omega_{x,\lambda})_{x,\lambda}(y)$, it holds
\begin{equation*}\begin{split}
\left(\frac{\lambda}{|y|}\right)^{n-\alpha}\tilde{\omega}_{0,\lambda}(y^{0,\lambda})&=\left(\frac{\lambda}{|y|}\right)^{n-\alpha}\omega_{0,\lambda}(y^{0,\lambda})-
\left(\frac{\lambda}{|y|}\right)^{n-\alpha}\omega_{0,\lambda}(\tilde{y})\\
&=-\omega_{0,\lambda}(y)+\omega_{0,\lambda}(\tilde{y})-\left(1+\left(\frac{\lambda}{|y|}\right)^{n-\alpha}\right)\omega_{0,\lambda}(\tilde{y})\\
&=-\tilde{\omega}_{0,\lambda}(y)-\left(1+\left(\frac{\lambda}{|y|}\right)^{n-\alpha}\right)\omega_{0,\lambda}(\tilde{y}).
\end{split}\end{equation*}

As a consequence, it follows that
\begin{equation*}\begin{split}
(-\Delta)^{\alpha/2}\tilde{\omega}_{0,\lambda}(\tilde{y})&=C_{n,\alpha} \, P.V.\int_{\mathbb{R}^n}\frac{\tilde{\omega}_{0,\lambda}(\tilde{y})-\tilde{\omega}_{0,\lambda}(z)}{|\tilde{y}-z|^{n+\alpha}}dz\\
&=C_{n,\alpha} \, P.V.\int_{B_\lambda(0)}\frac{-\tilde{\omega}_{0,\lambda}(z)}{|\tilde{y}-z|^{n+\alpha}}dz+\int_{\mathbb{R}^n\setminus{B_\lambda(0)}}
\frac{-\tilde{\omega}_{0,\lambda}(z)}{|\tilde{y}-z|^{n+\alpha}}dz\\
&=C_{n,\alpha} \, P.V.\Bigg(\int_{B_\lambda(0)}\frac{-\tilde{\omega}_{0,\lambda}(z)}{|\tilde{y}-z|^{n+\alpha}}dz+\int_{\mathbb{R}^n\setminus{B_\lambda(0)}}
\frac{\left(\frac{\lambda}{|z|}\right)^{n-\alpha}\tilde{\omega}_{0,\lambda}(z^{0,\lambda})}{|\tilde{y}-z|^{n+\alpha}}dz\\
&\ \ \ +\int_{\mathbb{R}^n\setminus{B_\lambda(0)}}\frac{\Big(1+\big(\frac{\lambda}{|z|}\big)^{n-\alpha}\Big)\omega_{0,\lambda}(\tilde{y})}{|\tilde{y}-z|^{n+\alpha}}dz\Bigg)\\
&=C_{n,\alpha} \, P.V.\Bigg(\int_{B_\lambda(0)}\frac{-\tilde{\omega}_{0,\lambda}(z)}{|\tilde{y}-z|^{n+\alpha}}dz+\int_{B_\lambda(0)}\frac{
\tilde{\omega}_{0,\lambda}(z)}{\left|\frac{|z|\tilde{y}}{\lambda}-\frac{\lambda z}{|z|}\right|^{n+\alpha}}dz\\
&\ \ \ +\int_{\mathbb{R}^n\setminus{B_\lambda(0)}}\frac{\Big(1+\big(\frac{\lambda}{|z|}\big)^{n-\alpha}\Big)\omega_{0,\lambda}(\tilde{y})}{|\tilde{y}-z|^{n+\alpha}}dz\Bigg).\\
\end{split}\end{equation*}
Notice that, for any $z\in B_\lambda(0)\setminus\{0\}$,
$$\left|\frac{|z|\tilde{y}}{\lambda}-\frac{\lambda z}{|z|}\right|^2-|\tilde{y}-z|^2=\frac{(|\tilde{y}|^2-\lambda^2)(|z|^2-\lambda^2)}{\lambda^2}>0,$$
combining this with $\omega_{0,\lambda}(\tilde{y})<0$ gives that
\begin{equation}\begin{split}\label{small}
(-\Delta)^{\alpha/2}\omega_{0,\lambda}(\tilde{y})&\leq C_{n,\alpha}\omega_{0,\lambda}(\tilde{y})\int_{\mathbb{R}^n\setminus{B_\lambda(0)}}\frac{1}{|\tilde{y}-z|^{n+\alpha}}dz\\
&\leq C_{n,\alpha}\omega_{0,\lambda}(\tilde{y})\int_{(\mathbb{R}^n\setminus{B_\lambda(0)})\cap(B_{4l}(\tilde{y})\setminus{B_l(\tilde{y})})}\frac{1}{|\tilde{y}-z|^{n+\alpha}}dz\\
&\leq \frac{C}{l^\alpha}\omega_{0,\lambda}(\tilde{y})<0.
\end{split}\end{equation}

For $\alpha=2$, we can also obtain the same estimate as \eqref{small} at some point $y_{0}\in\Omega\cap B^{-}_{\lambda}$. To this end, we define
\begin{equation}\label{2-14}
  \phi(y):=\cos\frac{|y|-\lambda+l}{l},
\end{equation}
then it follows that $\phi(y)\in[\cos1,1]$ for any $y\in\overline{A_{\lambda,l}(0)}=\{y\in\mathbb{R}^{n}\,|\,\lambda-l\leq|y|\leq\lambda\}$ and $-\frac{\Delta\phi(y)}{\phi(y)}\geq\frac{1}{l^2}$. Define
\begin{equation}\label{2-15}
  \overline{\omega_{0,\lambda}}(y):=\frac{\omega_{0,\lambda}(y)}{\phi(y)}
\end{equation}
for $y\in\overline{A_{\lambda,l}(0)}$. Then there exists a $y_{0}\in\Omega\cap B^{-}_{\lambda}$ such that
\begin{equation}\label{2-16}
  \overline{\omega_{0,\lambda}}(y_{0})=\min_{\overline{A_{\lambda,l}(0)}}\overline{\omega_{0,\lambda}}(y)<0.
\end{equation}
Since
\begin{equation}\label{2-17}
  -\Delta\omega_{0,\lambda}(y_{0})=-\Delta\overline{\omega_{0,\lambda}}(y_{0})\phi(y_{0})-2\nabla\overline{\omega_{0,\lambda}}(y_{0})\cdot\nabla\phi(y_{0})
  -\overline{\omega_{0,\lambda}}(y_{0})\Delta\phi(y_{0}),
\end{equation}
one immediately has
\begin{equation}\label{2-18}
 -\Delta\omega_{0,\lambda}(y_{0})\leq\frac{1}{l^2}\omega_{0,\lambda}(y_{0}).
\end{equation}
In conclusion, we have proved that for both $0<\alpha<2$ and $\alpha=2$, there exists some $\hat{y}\in\Omega\cap B^{-}_{\lambda}$ such that
\begin{equation}\label{2-19}
  (-\Delta)^{\frac{\alpha}{2}}\omega_{0,\lambda}(\hat{y})\leq\frac{C}{l^{\alpha}}\omega_{0,\lambda}(\hat{y})<0.
\end{equation}

On the other hand, by \eqref{nrp}, we have at the point $\hat{y}$,
\begin{eqnarray}\label{nrp-4}
  0 &\leq& (-\Delta)^{\frac{\alpha}{2}}\omega_{0,\lambda}(\hat{y})-\mathcal{L}_{0,\lambda}(\hat{y})\omega_{0,\lambda}(\hat{y})
  -2c_1\int_{B^{-}_{\lambda}}\frac{u(z)\omega_{0,\lambda}(z)}{|\hat{y}-z|^{2\alpha}}dz \cdot u^{p_{1}}(\hat{y}) \\
 \nonumber &\leq& (-\Delta)^{\frac{\alpha}{2}}\omega_{0,\lambda}(\hat{y})-c_{0,\lambda}(\hat{y})\omega_{0,\lambda}(\hat{y}),
\end{eqnarray}
where
\begin{eqnarray*}
  && c_{x,\lambda}(y):=\mathcal{L}_{x,\lambda}(y)+2c_1\widetilde{P}_{x,\lambda}(y)u^{p_{1}}(y) \\
  &&\qquad\quad\,\, = c_{1}p_{1}P(y)u_{x,\lambda}^{p_{1}-1}(y)+c_{2}p_{2}\max\left\{u^{p_{2}-1}(y),u_{x,\lambda}^{p_{2}-1}(y)\right\}+2c_1\widetilde{P}_{x,\lambda}(y)u^{p_{1}}(y)>0.
\end{eqnarray*}

Since $\lambda-l<|y|<\lambda$, we have
\begin{eqnarray}\label{nrp-5}
  P(y) &\leq & \left\{\int_{|y-z|<\frac{|z|}{2}}+\int_{|y-z|\geq\frac{|z|}{2}}\right\}\frac{u^{2}(z)}{|y-z|^{2\alpha}}dz \\
 \nonumber &\leq& \Big[\max_{|y|\leq2\lambda}u(y)\Big]^{2}\int_{|y-z|<\lambda}\frac{1}{|y-z|^{2\alpha}}dz
 +4^{\alpha}\int_{\mathbb{R}^{n}}\frac{u^{2}(z)}{|z|^{2\alpha}}dz \\
 \nonumber &\leq& C\lambda^{n-2\alpha}\Big[\max_{|y|\leq2\lambda}u(y)\Big]^{2}+4^{\alpha}\int_{\mathbb{R}^{n}}\frac{u^{2}(x)}{|x|^{2\alpha}}dx=:C'_{\lambda},
\end{eqnarray}
and
\begin{eqnarray}\label{nrp-6}
\widetilde{P}_{0,\lambda}(y)&\leq&\int_{|y-z|<2\lambda}\frac{1}{|y-z|^{2\alpha}}u(z)dz\\
\nonumber &\leq&C\lambda^{n-2\alpha}\Big[\max_{|y|\leq4\lambda}u(y)\Big]=:C''_{\lambda}.
\end{eqnarray}
It is obvious that $C'_{\lambda}$ and $C''_{\lambda}$ depend on $\lambda$ continuously and monotone increasing with respect to $\lambda>0$.

Therefore, we infer from \eqref{a1}, \eqref{nrp-5} and \eqref{nrp-6} that, for any $\lambda-l\leq|y|\leq\lambda$ and $y\in B^{-}_{\lambda}$,
\begin{eqnarray}\label{nrp-8}
 && 0<c_{0,\lambda}(y)=c_{1}p_{1}P(y)u_{0,\lambda}^{p_{1}-1}(y)+c_{2}p_{2}\max\left\{u^{p_{2}-1}(y),u_{0,\lambda}^{p_{2}-1}(y)\right\}+2c_1\widetilde{P}_{0,\lambda}(y)u^{p_{1}}(y) \\
 \nonumber &&\,\,\,\, \leq c_{1}p_{1}C'_{\lambda}\left[\min_{|y|\leq\lambda}u_{0,\lambda}(y)\right]^{p_{1}-1}
 +c_{2}p_{2}\max\left\{\left(\max_{|y|\leq\lambda}u(y)\right)^{p_{2}-1},\left(\min_{|y|\leq\lambda}u_{0,\lambda}(y)\right)^{p_{2}-1}\right\} \\
 \nonumber && \quad\,\,\, +2c_1C''_{\lambda}\left[\max_{|y|\leq\lambda}u(y)\right]^{p_{1}}=:C_{\lambda},
\end{eqnarray}
where $C_{\lambda}$ depends continuously on $\lambda$ and monotone increasing with respect to $\lambda>0$.

As a consequence, it follows from \eqref{2-19}, \eqref{nrp-4} and \eqref{nrp-8} that
\begin{equation}\label{nrp-10}
0\leq(-\Delta)^{\frac{\alpha}{2}}\omega_{0,\lambda}(\hat{y})-c(\hat{y})\omega_{0,\lambda}(\hat{y})
\leq\left(\frac{C}{l^{\alpha}}-C_{\lambda}\right)\omega_{0,\lambda}(\hat{y}),
\end{equation}
that is,
\begin{equation}\label{nrp-12}
  \frac{C}{\lambda^{\alpha}}\leq\frac{C}{l^{\alpha}}\leq C_{\lambda}.
\end{equation}
We can derive a contradiction from \eqref{nrp-12} directly if $0<\lambda\leq\delta_{0}$ for some constant $\delta_{0}$ small enough, or if $0<l\leq l_{0}$ for some sufficiently small $l_{0}$ depending on $\lambda$ continuously. This implies that \eqref{nrp1} and \eqref{nrp2} must hold. Furthermore, by \eqref{nrp}, we can actually deduce from $\omega_{x,\lambda}(y)\geq0$ in $\Omega$ that
\begin{equation}\label{nrp-11}
  \omega_{x,\lambda}(y)\geq0, \,\,\,\,\,\, \forall \,\, y\in B_\lambda(x)\setminus\{x\}.
\end{equation}
This completes the proof of Theorem \ref{Thm2}.
\end{proof}

The following lemma provides a start point for us to move the spheres.

\begin{lem}\label{lemmasstart}
For every $x\in \mathbb{R}^n$, there exists $\epsilon_0(x)>0$ such that, $u_{x,\lambda}(y)\geq u(y)$ for all $\lambda\in(0,\epsilon_0(x)]$ and $y\in B_\lambda(x)\setminus \{x\}$.
\end{lem}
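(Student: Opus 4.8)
The plan is to combine the two auxiliary results already at our disposal: Lemma \ref{strictlypositive}, which shows $\omega_{x,\lambda}(y)\geq1$ on the small shell $\overline{B_{\lambda^2}(x)}\setminus\{x\}$ for $0<\lambda\leq\eta_0(x)$, and the Narrow region principle (Theorem \ref{Thm2}), which rules out a negative interior minimum once the ``bad set'' $B^-_\lambda$ is forced to sit inside a narrow annulus near $\partial B_\lambda(x)$. First I would fix $x\in\mathbb{R}^n$ and, by the anti-symmetry identity $\omega_{x,\lambda}(y)=-(\omega_{x,\lambda})_{x,\lambda}(y)$ derived in \eqref{2-0}, observe that if $\omega_{x,\lambda}$ ever dips below zero in $B_\lambda(x)\setminus\{x\}$ it must attain a negative minimum at some interior point $\tilde y$; the anti-symmetry also shows that this minimum point cannot lie in the inner shell $\overline{B_{\lambda^2}(x)}$ (there $\omega_{x,\lambda}\geq1$) nor be sent by the inversion $y\mapsto y^{x,\lambda}$ outside $B_\lambda(x)$ to a point where things blow up. So for $\lambda\leq\eta_0$ any putative negative minimum is attained in the annular region $\lambda^2<|y-x|<\lambda$.

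Next I would verify that $\omega_{x,\lambda}$ satisfies the differential inequality \eqref{0-5}/\eqref{nrp} on $B^-_\lambda$. This is the step where the nonlocal nonlinearity must be handled with care: subtracting the equation \eqref{2-1-2} for $u_{x,\lambda}$ (with the conformal factors $(\lambda/|y-x|)^{\tau_i}\geq1$ on $B_\lambda(x)$, using $\tau_1,\tau_2\geq0$) from the equation \eqref{PDEH} for $u$, and applying the mean value theorem to $t\mapsto t^{p_i}$, produces the local term $\mathcal{L}(y)\omega_{x,\lambda}(y)$ together with the convolution term $c_1\big(\tfrac{1}{|\cdot|^{2\alpha}}\ast(u_{x,\lambda}^2-u^2)\big)u^{p_1}(y)$; writing $u_{x,\lambda}^2-u^2=(u_{x,\lambda}+u)\omega_{x,\lambda}$ and then splitting the convolution over $B^-_\lambda$ versus its complement, on the complement $\omega_{x,\lambda}\geq0$ so that contribution has a favorable sign, while on $B^-_\lambda$ one uses the anti-symmetry to bound $u_{x,\lambda}+u$ appropriately and extract exactly the term $2c_1\big(\int_{B^-_\lambda}\tfrac{u(z)\omega_{x,\lambda}(z)}{|y-z|^{2\alpha}}dz\big)u^{p_1}(y)$ appearing in \eqref{nrp}. (Here the factor $2$ and the replacement of $u_{x,\lambda}$ by $u$ inside the bad-set integral come from the anti-symmetry $u_{x,\lambda}(z)\omega_{x,\lambda}(z^{x,\lambda})\cdots$ bookkeeping, as in \cite{CLZ,DFQ}.) Once \eqref{nrp} is established on $\Omega=A_{\lambda,l}(x)\cap B^-_\lambda$ — with the two side conditions on where the negative minimum can be attained supplied by the previous paragraph — Theorem \ref{Thm2}(i) yields $\delta_0(x)>0$ with $\omega_{x,\lambda}\geq0$ on all of $B_\lambda(x)\setminus\{x\}$ for $0<\lambda\leq\delta_0(x)$.

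Finally I would set $\epsilon_0(x):=\min\{\eta_0(x),\delta_0(x)\}$ and conclude $u_{x,\lambda}(y)\geq u(y)$ for every $0<\lambda\leq\epsilon_0(x)$ and $y\in B_\lambda(x)\setminus\{x\}$, which is exactly the assertion of Lemma \ref{lemmasstart}. The main obstacle, as indicated, is the middle step: verifying that the nonlocal difference can genuinely be organized into the inequality \eqref{nrp} with the correct signs — in particular that the conformal weights $(\lambda/|y-x|)^{\tau_i}$ only help (which needs $p_1\leq1$ and $p_2\leq\frac{n+\alpha}{n-\alpha}$, i.e.\ $\tau_1,\tau_2\geq0$), and that the portion of the convolution over $(B_\lambda(x)\setminus\{x\})\setminus B^-_\lambda$ together with the contribution from $\mathbb{R}^n\setminus B_\lambda(x)$ (transformed via the inversion) does not spoil the sign. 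Everything else is either the anti-symmetry computation \eqref{2-0}, already done, or a direct invocation of Lemmas \ref{strictlypositive} and the Narrow region principle \ref{Thm2}.
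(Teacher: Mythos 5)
Your proposal follows the paper's proof essentially step for step: establish the differential inequality \eqref{2-1-8} on $B^-_\lambda$, combine Lemma~\ref{strictlypositive} (which pins $\omega_{x,\lambda}\geq 1$ on $\overline{B_{\lambda^2}(x)}\setminus\{x\}$) with the choice $\Omega=A_{\lambda,l}(x)$, $l=\lambda-\lambda^2$, invoke Theorem~\ref{Thm2}(i), and set $\epsilon_0(x)=\min\{\eta_0(x),\delta_0(x)\}$. One point of imprecision worth flagging: your phrase ``splitting the convolution over $B^-_\lambda$ versus its complement, on the complement $\omega_{x,\lambda}\geq 0$ so that contribution has a favorable sign'' is not literally how the sign is controlled — outside $B_\lambda(x)$ the anti-symmetry actually gives $u_{x,\lambda}\leq u$, so that raw contribution has the \emph{wrong} sign; the paper's mechanism is first to fold $\int_{\mathbb{R}^n}$ into $\int_{B_\lambda(x)}$ via the inversion, producing the kernel difference
$\frac{1}{\left|\frac{(y-x)|z-x|}{\lambda}-\frac{\lambda(z-x)}{|z-x|}\right|^{2\alpha}}-\frac{1}{|y-z|^{2\alpha}}\le 0$,
and then it is this difference times $(u^2-u_{x,\lambda}^2)$ that splits with favorable sign on $B_\lambda(x)\setminus B^-_\lambda$ and yields the $2c_1\int_{B^-_\lambda}\frac{u\,\omega_{x,\lambda}}{|y-z|^{2\alpha}}\,u^{p_1}$ term on $B^-_\lambda$ (using $u_{x,\lambda}+u\leq 2u$ there, which needs only the definition of $B^-_\lambda$, not anti-symmetry). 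Since you explicitly flag this as the step requiring care and gesture at the inversion, I read this as a presentation gap rather than a missing idea; the structure of your argument is the paper's.
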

\begin{proof}
For every $x\in \mathbb{R}^n$, recall that
\begin{equation*}
 B_\lambda^-=\{y\in B_\lambda(x)\setminus \{x\}\,|\,\omega_{x,\lambda}(y)<0\}.
\end{equation*}

Take $\epsilon_{0}(x):=\min\{\eta_{0}(x),\delta_{0}(x)\}$, where $\eta_{0}(x)$ and $\delta_{0}(x)$ are defined the same as in Lemma \ref{strictlypositive} and Theorem \ref{Thm2}. We will show via contradiction arguments that, for any $0<\lambda\leq\epsilon_{0}$,
\begin{equation}\label{2-1-6}
B^{-}_{\lambda}=\emptyset.
\end{equation}

Suppose \eqref{2-1-6} does not hold, that is, $B^{-}_{\lambda}\neq\emptyset$ and hence $\omega_{x,\lambda}$ is negative somewhere in $B_{\lambda}(x)\setminus\{x\}$. For arbitrary $y\in B^{-}_{\lambda}$, we deduce from \eqref{PDEH} and \eqref{2-1-2} that
\begin{equation*}\begin{split}\label{2-1-7}
&\quad (-\Delta)^{\frac{\alpha}{2}}\omega_{x,\lambda}(y)\\
&\geq c_1\left(\left(\frac{1}{|\cdot|^{2\alpha}}\ast u_{x,\lambda}^{2}\right)(y)u^{p_{1}}_{x,\lambda}(y)
-\left(\frac{1}{|\cdot|^{2\alpha}}\ast u^{2}\right)(y)u^{p_{1}}(y)\right)
+c_2\big(u_{x,\lambda}^{p_{2}}(y)-u^{p_{2}}(y)\big)\\
&\geq c_1p_{1}\int_{\mathbb{R}^{n}}\frac{u^{2}(z)}{|y-z|^{2\alpha}}dz \, u_{x,\lambda}^{p_{1}-1}(y)\omega_{x,\lambda}(y)+c_2p_{2}\max\left\{u^{p_{2}-1}(y),u_{x,\lambda}^{p_{2}-1}(y)\right\}\omega_{x,\lambda}(y) \\
& \quad +c_1\int_{\mathbb{R}^{n}}\frac{u_{x,\lambda}^{2}(z)-u^2(z)}{|y-z|^{2\alpha}}dz\,u_{x,\lambda}^{p_{1}}(y) \\
&=\mathcal{L}_{x,\lambda}(y)\omega_{x,\lambda}(y)+c_1\int_{\mathbb{R}^{n}}\frac{u_{x,\lambda}^{2}(z)-u^2(z)}{|y-z|^{2\alpha}}dz\,u_{x,\lambda}^{p_{1}}(y) \\
&=\mathcal{L}_{x,\lambda}(y)\omega_{x,\lambda}(y)+c_1u_{x,\lambda}^{p_{1}}(y)\int_{B_\lambda(x)}\Bigg(\frac{1}{\left|\frac{(y-x)|z-x|}{\lambda}-\frac{\lambda (z-x)}{|z-x|}\right|^{2\alpha}}-\frac{1}{|y-z|^{2\alpha}}\Bigg)(u^2(z)-u_{x,\lambda}^{2}(z))dz \\
&\geq \mathcal{L}_{x,\lambda}(y)\omega_{x,\lambda}(y)+c_1u^{p_{1}}(y)\int_{B^-_\lambda(x)}\frac{1}{|y-z|^{2\alpha}}(u_{x,\lambda}^{2}(z)-u^2(z))dz \\
&\geq \mathcal{L}_{x,\lambda}(y)\omega_{x,\lambda}(y)+2c_1\left(\int_{B^{-}_{\lambda}}\frac{u(z)\omega_{x,\lambda}(z)}{|y-z|^{2\alpha}}dz\right)u^{p_{1}}(y),
\end{split}\end{equation*}
that is, for all $y\in B^{-}_{\lambda}$,
\begin{equation}\label{2-1-8}
  (-\Delta)^{\frac{\alpha}{2}}\omega_{x,\lambda}(y)-\mathcal{L}_{x,\lambda}(y)\omega_{x,\lambda}(y)
  -2c_1\left(\int_{B^{-}_{\lambda}}\frac{u(z)\omega_{x,\lambda}(z)}{|y-z|^{2\alpha}}dz\right)u^{p_{1}}(y)\geq0.
\end{equation}

Since $\epsilon_{0}(x):=\min\{\eta_{0}(x),\delta_{0}(x)\}$, by Lemma \ref{strictlypositive}, we have, for any $0<\lambda\leq\epsilon_{0}$,
\begin{equation}\label{4-1}
  \omega_{x,\lambda}(y)\geq1, \qquad \forall \,\, y\in \overline{B_{\lambda^{2}}(x)}\setminus\{x\}.
\end{equation}
Therefore, by taking $l=\lambda-\lambda^{2}$ and $\Omega=A_{\lambda,l}(x)$, then it follows from \eqref{2-1-8} and \eqref{4-1} that all the conditions in \eqref{nrp} in Theorem \ref{Thm2} are fulfilled, we can deduce from (i) in Theorem \ref{Thm2} that $\omega_{x,\lambda}\geq0$ in $\Omega=A_{\lambda,l}(x)$ for any $0<\lambda\leq\epsilon_{0}(x)$. That is, there exists $\epsilon_0(x)>0$ such that, for all $\lambda\in(0,\epsilon_0(x)]$,
$$\omega_{x,\lambda}(y)\geq 0, \qquad \forall \,\, y\in B_{\lambda}(x)\setminus\{x\}.$$
This completes the proof of Lemma \ref{lemmasstart}.
\end{proof}

For each fixed $x\in \mathbb{R}^n$, we define
\begin{equation}\label{defn}
  \bar{\lambda}(x)=\sup\{\lambda>0 \,|\, u_{x,\mu}\geq u\,\, \text{in} \,\, B_{\mu}(x)\setminus\{x\},\,\, \forall \,\, 0<\mu\leq\lambda\}.
\end{equation}
By Lemma \ref{lemmasstart}, $\bar{\lambda}(x)$ is well-defined and $0<\bar{\lambda}(x)\leq+\infty$ for any $x\in \mathbb{R}^n$.

We need the following Lemma, which is crucial in our proof.
\begin{lem}\label{lemmasequali}
If $\bar{\lambda}(\bar{x})<+\infty$ for some $\bar{x}\in \mathbb{R}^n$, then
\begin{equation*}\label{equality}
u_{\bar{x},\bar{\lambda}(\bar{x})}(y)=u(y),\,\,\,\,\,\,\,\, \forall \,\, y\in B_{\bar{\lambda}}(\bar{x})\setminus \{\bar{x}\}.
\end{equation*}
\end{lem}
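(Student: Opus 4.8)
The plan is to argue by contradiction: suppose $\bar\lambda:=\bar\lambda(\bar x)<+\infty$ but $u_{\bar x,\bar\lambda}\not\equiv u$ in $B_{\bar\lambda}(\bar x)\setminus\{\bar x\}$. By the definition \eqref{defn} and continuity we already know $\omega_{\bar x,\bar\lambda}\geq0$ in $B_{\bar\lambda}(\bar x)\setminus\{\bar x\}$, so the hypothesis forces $\omega_{\bar x,\bar\lambda}>0$ somewhere. First I would upgrade this to a \emph{strict} positivity statement $\omega_{\bar x,\bar\lambda}(y)>0$ for every $y\in B_{\bar\lambda}(\bar x)\setminus\{\bar x\}$: since $\omega:=\omega_{\bar x,\bar\lambda}$ satisfies (taking $B^-_{\bar\lambda}=\emptyset$ so the convolution term drops out) $(-\Delta)^{\alpha/2}\omega(y)\geq \mathcal{L}(y)\omega(y)$ in the region where $\omega\geq0$, a standard strong maximum principle for the fractional Laplacian (and for $\alpha=2$ the classical Hopf lemma) applied at a hypothetical interior zero $y_0$ gives, using the nonlocal integral expression, $(-\Delta)^{\alpha/2}\omega(y_0)=C_{\alpha,n}\int_{\mathbb{R}^n}\frac{-\omega(z)}{|y_0-z|^{n+\alpha}}dz<0$ unless $\omega\equiv0$, contradicting $\mathcal L(y_0)\omega(y_0)=0\le (-\Delta)^{\alpha/2}\omega(y_0)$; here the anti-symmetry relation $\omega(y)=-\big(\omega\big)_{\bar x,\bar\lambda}(y)$ and the computation already performed in the proof of Theorem \ref{Thm2} are used to rewrite the exterior integral and confirm the sign. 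Hence $\omega_{\bar x,\bar\lambda}>0$ strictly in the punctured ball.

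Next I would exploit this strict positivity together with a uniform estimate near the boundary sphere to show that the spheres can still be pushed past $\bar\lambda$, contradicting maximality. The mechanism is the standard one in the method of moving spheres: pick a small annulus-interior set and a compact set $K\subset B_{\bar\lambda}(\bar x)\setminus\{\bar x\}$ away from both $\bar x$ and $\partial B_{\bar\lambda}(\bar x)$; by strict positivity and continuity in $\lambda$ there is $c_0>0$ with $\omega_{\bar x,\bar\lambda}\geq c_0$ on $K$, and this persists, say $\omega_{\bar x,\lambda}\geq c_0/2$ on $K$, for $\lambda\in[\bar\lambda,\bar\lambda+\varepsilon_1]$. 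It then remains to control $\omega_{\bar x,\lambda}$ on the thin region $\Omega_\lambda=B_\lambda(\bar x)\setminus(K\cup\{\bar x\})$, which splits into a neighbourhood of $\bar x$ — handled exactly as in Lemma \ref{strictlypositive}, giving $\omega_{\bar x,\lambda}\geq1$ near $\bar x$ for $\lambda$ close to $\bar\lambda$ — and a narrow region $A_{\lambda,l}(\bar x)$ adjacent to the moving sphere with thickness $l$ small. On this narrow region, for $\lambda\in[\bar\lambda,\bar\lambda+\varepsilon_1]$ one has (exactly as in Lemma \ref{lemmasstart}) the differential inequality \eqref{2-1-8}, and since $B^-_\lambda$, if nonempty, must lie inside $A_{\lambda,l}(\bar x)$ (because $\omega$ is already $\geq c_0/2$ on $K$ and $\geq1$ near $\bar x$), all hypotheses of the Narrow region principle, Theorem \ref{Thm2}, are satisfied; part (ii) of that theorem, applied with $\lambda$ ranging in the compact interval $[\bar\lambda,\bar\lambda+\varepsilon_1]$ so that the threshold $l_0(\bar x,\lambda)$ has a positive lower bound, yields $\omega_{\bar x,\lambda}\geq0$ on $\Omega_\lambda$ and hence on all of $B_\lambda(\bar x)\setminus\{\bar x\}$ for every such $\lambda$. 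This contradicts the definition of $\bar\lambda$ as a supremum, so we must have had $\omega_{\bar x,\bar\lambda}\equiv0$, i.e. $u_{\bar x,\bar\lambda}\equiv u$ in $B_{\bar\lambda}(\bar x)\setminus\{\bar x\}$.

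The main obstacle, and the point deserving the most care, is verifying that the perturbed function $\omega_{\bar x,\lambda}$ for $\lambda$ slightly larger than $\bar\lambda$ genuinely satisfies the nonlocal differential inequality \eqref{2-1-8} with the \emph{same} structure (coefficient $\mathcal L$ and the convolution term over $B^-_\lambda$) — this requires reusing the pointwise algebraic manipulation of the nonlocal nonlinearity done in the proof of Lemma \ref{lemmasstart}, in particular the sign of the kernel difference $\big|\tfrac{(y-\bar x)|z-\bar x|}{\lambda}-\tfrac{\lambda(z-\bar x)}{|z-\bar x|}\big|^{-2\alpha}-|y-z|^{-2\alpha}$ on $B_\lambda(\bar x)$, which is valid for every $\lambda>0$ — and then checking that the "negative minimum" location hypotheses in \eqref{nrp} hold, which is precisely where strict positivity on the compact interior set $K$ and the lower bound $1$ near $\bar x$ are needed to confine any negativity to the narrow boundary layer. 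A secondary technical point is the uniformity of the narrow-region threshold $l_0$ over the compact $\lambda$-interval; this follows from the continuous dependence on $\lambda$ of the constants $C'_\lambda,C''_\lambda,C_\lambda$ recorded in the proof of Theorem \ref{Thm2}, so one takes $l_0:=\min_{\lambda\in[\bar\lambda,\bar\lambda+\varepsilon_1]}l_0(\bar x,\lambda)>0$.
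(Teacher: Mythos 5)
Your strict-positivity step is a genuine alternative to the paper's: you propose a nonlocal strong maximum principle at a hypothetical interior zero of $\omega_{\bar x,\bar\lambda}$, where the paper instead uses the equivalent integral equation \eqref{IE} to rewrite $\omega_{\bar x,\bar\lambda}(y)$ as a sum of integrals against the sign-definite kernel $K_{1,\bar{\lambda}}(y,z)$. Both routes do yield $\omega_{\bar x,\bar\lambda}>0$ on the punctured ball, and your reuse of the derivation of \eqref{2-1-8} for $\lambda>\bar\lambda$, the uniform continuity argument, and the uniform $l_0$ over a compact $\lambda$-interval are all correct and match the paper. However, your handling of the punctured neighbourhood of $\bar x$ has a genuine gap. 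You assert this region is ``handled exactly as in Lemma \ref{strictlypositive}, giving $\omega_{\bar x,\lambda}\geq1$ near $\bar x$ for $\lambda$ close to $\bar\lambda$.'' But the proof of Lemma \ref{strictlypositive} only gives, for arbitrary $\lambda>0$, the lower bound $u_{\bar x,\lambda}(y)\geq C/\lambda^{n-\alpha}$ on $\overline{B_{\lambda^2}(\bar x)}\setminus\{\bar x\}$, and hence $\omega_{\bar x,\lambda}(y)\geq C/\lambda^{n-\alpha}-\max_{\overline{B_{\lambda^2}(\bar x)}}u$, which is only $\geq1$ for $\lambda$ small. For $\lambda$ near $\bar\lambda$ (which need not be small) the right-hand side may well be negative, so this route supplies no positive lower bound near $\bar x$. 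Strict positivity of $\omega_{\bar x,\bar\lambda}$ on the open punctured ball does not save you either: the punctured set $\overline{B_{\bar\lambda-r_0}(\bar x)}\setminus\{\bar x\}$ is not compact, and the inversion relation shows that the behaviour of $\omega_{\bar x,\bar\lambda}(y)$ as $y\to\bar x$ is controlled by the asymptotics of $|z-\bar x|^{n-\alpha}u(z)$ as $|z|\to\infty$, about which you have no information at this stage.

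This is exactly what the paper's integral-equation argument supplies and your PDE-only approach does not. In inequality \eqref{4-2}, $\omega_{\bar x,\bar\lambda}(y)$ is bounded below by an integral over a fixed ball $B_{\delta/2}(y^0)$ (chosen where $\omega_{\bar x,\bar\lambda}\geq c_0>0$) against $K_{1,\bar\lambda}(y,z)$. As $y\to\bar x$ with $z$ ranging over $B_{\delta/2}(y^0)$ away from $\bar x$, the kernel converges to $R_{\alpha,n}\big(|z-\bar x|^{-(n-\alpha)}-\bar\lambda^{-(n-\alpha)}\big)>0$, which does \emph{not} degenerate, and this is what yields the uniform lower bound $\omega_{\bar x,\bar\lambda}\geq\tilde c_0>0$ in a punctured neighbourhood of $\bar x$ (eq. \eqref{2-38}). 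That bound, together with strict positivity on the interior annulus, is what makes $m_0:=\inf_{\overline{B_{\bar\lambda-r_0}(\bar x)}\setminus\{\bar x\}}\omega_{\bar x,\bar\lambda}>0$ legitimate. In your version $m_0$ over the full punctured set is not justified. To repair the proof you should replace the appeal to Lemma \ref{strictlypositive} with the estimate coming from \eqref{4-2}--\eqref{2-38}; once that is in place, the rest of your argument goes through as you describe.
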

\begin{proof}
Without loss of generality, we may assume $x=0$ for simplicity. Since $u$ is a positive solution to integral equation \eqref{IE}, one can verify that $u_{0,\lambda}$ also satisfies a similar integral equation as \eqref{IE} in $\mathbb{R}^{n}\setminus\{0\}$. In fact, by \eqref{IE} and direct calculations, we have, for any $y\in\mathbb{R}^{n}\setminus\{0\}$,
\begin{eqnarray}\label{2-1-41}
  \nonumber &&u_{0,\lambda}(y)=\left(\frac{\lambda}{|y|}\right)^{n-\alpha}u\left(\frac{\lambda^{2}y}{|y^{2}|}\right) \\
 \nonumber &=&\frac{\lambda^{n-\alpha}}{|y|^{n-\alpha}}\left(\int_{\mathbb{R}^{n}}\frac{c_1R_{\alpha,n}}{\big|\frac{\lambda^2y}{|y|^{2}}-z\big|^{n-\alpha}}
  \int_{\mathbb{R}^{n}}\frac{u^{2}(\xi)}{|z-\xi|^{2\alpha}}d\xi u^{p_{1}}(z)dz
  +\int_{\mathbb{R}^{n}}\frac{c_2R_{\alpha,n}}{\big|\frac{\lambda^2y}{|y|^{2}}-z\big|^{n-\alpha}}u^{p_{2}}(z)dz\right)\\
   \nonumber  &=& \frac{\lambda^{n-\alpha}}{|y|^{n-\alpha}}\int_{\mathbb{R}^{n}}\frac{c_1R_{\alpha,n}}{\big|\frac{\lambda^2y}{|y|^{2}}-\frac{\lambda^2z}{|z|^{2}}\big|^{n-\alpha}}
 \int_{\mathbb{R}^{n}}\frac{u^{2}(\frac{\lambda^2\xi}{|\xi|^{2}})}{\big|\frac{\lambda^2z}{|z|^{2}}
 -\frac{\lambda^2\xi}{|\xi|^{2}}\big|^{2\alpha}}\frac{\lambda^{2n}}{|\xi|^{2n}}d\xi
 u^{p_{1}}\left(\frac{\lambda^2z}{|z|^{2}}\right)\frac{\lambda^{2n}}{|z|^{2n}}dz\\
  \nonumber & \ \ \ +&\frac{\lambda^{n-\alpha}}{|y|^{n-\alpha}} \int_{\mathbb{R}^{n}}\frac{c_2R_{\alpha,n}}{\big|\frac{\lambda^2y}{|y|^{2}}-\frac{\lambda^2z}{|z|^{2}}\big|^{n-\alpha}}u^{p_{2}}\left(\frac{\lambda^2z}{|z|^{2}}\right)
  \frac{\lambda^{2n}}{|z|^{2n}}dz\\
  \nonumber  &=& \int_{\mathbb{R}^{n}}\frac{R_{\alpha,n}}{|y-z|^{n-\alpha}}\left[c_{1}\left(\frac{\lambda}{|z|}\right)^{\tau_{1}}
  \Big(\int_{\mathbb{R}^{n}}\frac{u_{0,\lambda}^{2}(\xi)}{|z-\xi|^{2\alpha}}d\xi\Big)u^{p_{1}}_{0,\lambda}(z)
  +c_{2}\left(\frac{\lambda}{|z|}\right)^{\tau_{2}}u_{0,\lambda}^{p_{2}}(z)\right]dz,
\end{eqnarray}
where $\tau_{1}:=(n-\alpha)(1-p_{1})\geq0$ and $\tau_{2}:=(n+\alpha)-p_{2}(n-\alpha)\geq0$.

Suppose on the contrary that $\omega_{0,\bar{\lambda}}\geq0$ but $\omega_{0,\bar{\lambda}}$ is not identically zero in $B_{\bar{\lambda}}(0)\setminus\{0\}$,
then we will get a contradiction with the definition \eqref{defn} of $\bar{\lambda}$. We first prove that
\begin{equation}\label{positive}
  \omega_{0,\bar{\lambda}}(y)>0, \,\,\,\,\,\, \forall \, y\in B_{\bar{\lambda}}(0)\setminus\{0\}.
\end{equation}
Indeed, if there exists a point $y^{0}\in B_{\bar{\lambda}}(0)\setminus\{0\}$ such that $\omega_{0,\bar{\lambda}}(y^{0})>0$, by continuity, there exists a small $\delta>0$ and a constant $c_{0}>0$ such that
\begin{equation*}\label{2-1-43}
B_{\delta}(y^{0})\subset B_{\bar{\lambda}}(0)\setminus\{0\} \,\,\,\,\,\, \text{and} \,\,\,\,\,\,
\omega_{0,\bar{\lambda}}(y)\geq c_{0}>0, \,\,\,\, \forall \, y\in B_{\delta}(y^{0}).
\end{equation*}
For any $y\in B_{\bar{\lambda}}(0)\setminus\{0\}$, one can derive that
\begin{equation*}\begin{split}
u(y)&=\int_{\mathbb{R}^{n}}\frac{c_1R_{\alpha,n}}{|y-z|^{n-\alpha}}P(z)u^{p_{1}}(z)dz
  +\int_{\mathbb{R}^{n}}\frac{c_2R_{\alpha,n}}{\big|y-z\big|^{n-\alpha}}u^{p_{2}}(z)dz\\
&=\int_{B_{\bar{\lambda}}(0)}\frac{c_1R_{\alpha,n}}{|y-z|^{n-\alpha}}P(z)u^{p_{1}}(z)dz
+\int_{B_{\bar{\lambda}}(0)}\frac{c_1R_{\alpha,n}}{|\frac{y|z|}{\bar{\lambda}}-\frac{\bar{\lambda}z}{|z|}|^{n-\alpha}}
P(z^{\bar{\lambda}})\left(\frac{\bar{\lambda}}{|z|}\right)^{2\alpha+\tau_{1}}u^{p_{1}}_{0,\bar{\lambda}}(z)dz\\
& +\int_{B_{\bar{\lambda}}(0)}\frac{c_2R_{\alpha,n}}{\big|y-z\big|^{n-\alpha}}u^{p_{2}}(z)dz
+\int_{B_{\bar{\lambda}}(0)}\frac{c_2R_{\alpha,n}}{\big|\frac{y|z|}{\bar{\lambda}}-\frac{\bar{\lambda}z}{|z|}\big|^{n-\alpha}}
\left(\frac{\bar{\lambda}}{|z|}\right)^{\tau_{2}}u_{0,\bar{\lambda}}^{p_{2}}(z)dz,\\
\end{split}\end{equation*}
and
\begin{equation*}\begin{split}
u_{0,\bar{\lambda}}(y)&=\int_{\mathbb{R}^{n}}\frac{c_1R_{\alpha,n}}{|y-z|^{n-\alpha}}\left(\frac{\bar{\lambda}}{|z|}\right)^{\tau_{1}}
\bar{P}_{0,\bar{\lambda}}(z)u^{p_{1}}_{0,\bar{\lambda}}(z)dz+\int_{\mathbb{R}^{n}}\frac{c_2R_{\alpha,n}}{\big|y-z\big|^{n-\alpha}}
\left(\frac{\bar{\lambda}}{|z|}\right)^{\tau_{2}}u_{0 ,\bar{\lambda}}^{p_{2}}(z)dz\\
&=\int_{B_{\bar{\lambda}}(0)}\frac{c_1R_{\alpha,n}}{|y-z|^{n-\alpha}}\left(\frac{\bar{\lambda}}{|z|}\right)^{\tau_{1}}
\bar{P}_{0,\bar{\lambda}}(z)u^{p_{1}}_{0,\bar{\lambda}}(z)dz \\
&\quad+\int_{B_{\bar{\lambda}}(0)}\frac{c_1R_{\alpha,n}}{|\frac{y|z|}{\bar{\lambda}}-\frac{\bar{\lambda}z}{|z|}|^{n-\alpha}}
\bar{P}_{0,\bar{\lambda}}(z^{\bar{\lambda}})\left(\frac{\bar{\lambda}}{|z|}\right)^{2\alpha}u^{p_{1}}(z)dz\\
&\quad+\int_{B_{\bar{\lambda}}(0)}\frac{c_2R_{\alpha,n}}{\big|y-z\big|^{n-\alpha}}
\left(\frac{\bar{\lambda}}{|z|}\right)^{\tau_{2}}u_{0,\bar{\lambda}}^{p_{2}}(z)dz
+\int_{B_{\bar{\lambda}}(0)}\frac{c_2R_{\alpha,n}}{\big|\frac{y|z|}
{\bar{\lambda}}-\frac{\bar{\lambda}z}{|z|}\big|^{n-\alpha}}u^{p_{2}}(z)dz,
\end{split}\end{equation*}
where
$$\bar{P}_{x,\lambda}(y):=\bigg(\frac{1}{|\cdot|^{2\alpha}}\ast u_{x,\lambda}^2\bigg)(y).$$

Let us define
$$K_{1,\bar{\lambda}}(y,z)=R_{\alpha,n}\left(\frac{1}{\big|y-z\big|^{n-\alpha}}
-\frac{1}{\big|\frac{y|z|}{\bar{\lambda}}-\frac{\bar{\lambda}z}{|z|}\big|^{n-\alpha}}\right),$$
$$K_{2,\bar{\lambda}}(y,z)=R_{\alpha,n}\left(\frac{1}{\big|y-z\big|^{2\alpha}}
-\frac{1}{\big|\frac{y|z|}{\bar{\lambda}}-\frac{\bar{\lambda}z}{|z|}\big|^{2\alpha}}\right).$$
It is easy to check that $K_{1,\bar{\lambda}}(y,z)>0$, $K_{2,\bar{\lambda}}(y,z)>0$, and
$$\bar{P}_{0,\bar{\lambda}}(z)=P(z^{\bar{\lambda}})\left(\frac{\bar{\lambda}}{|z|}\right)^{2\alpha},\ \ \quad \ \ \ P(z)=\bar{P}_{0,\bar{\lambda}}(z^{\bar{\lambda}})\left(\frac{\bar{\lambda}}{|z|}\right)^{2\alpha},$$
and furthermore,
$$\bar{P}_{0,\bar{\lambda}}(z)-P(z)=\int_{B_{\bar{\lambda}}(0)}K_{2,\bar{\lambda}}(z,\xi)\big(u^2_{0,\bar{\lambda}}(\xi)-u^2(\xi)\big)d\xi>0.$$
As a consequence, it follows immediately that, for any $y\in B_{\bar{\lambda}}(0)\setminus\{0\}$,
\begin{equation}\label{4-2}\begin{split}
\omega_{0,\bar{\lambda}}(y)&=c_1\int_{B_{\bar{\lambda}}(0)}K_{1,\bar{\lambda}}(y,z)P(z)
\left(\left(\frac{\bar{\lambda}}{|z|}\right)^{\tau_{1}}u^{p_{1}}_{0,\bar{\lambda}}(z)-u^{p_{1}}(z)\right)dz\\
&\ \ \ +c_1\int_{B_{\bar{\lambda}}(0)}K_{1,\bar{\lambda}}(y,z)(\bar{P}_{0,\bar{\lambda}}(z)-P(z))\left(\frac{\bar{\lambda}}{|z|}\right)^{\tau_{1}}u^{p_{1}}_{0,\bar{\lambda}}(z)dz\\
&\ \ \ +c_2\int_{B_{\bar{\lambda}}(0)}K_{1,\bar{\lambda}}(y,z)\left(\left(\frac{\bar{\lambda}}{|z|}\right)^{\tau_{2}}u^{p_{2}}_{0,\bar{\lambda}}(z)-u^{p_{2}}(z)\right)dz\\
&\geq c_1p_{1}\int_{B_{\delta}(y^0)}K_{1,\bar{\lambda}}(y,z)P(z)u^{p_{1}-1}_{0,\bar{\lambda}}(z)(u_{0,\bar{\lambda}}(z)-u(z))dz\\
&\quad+c_2p_{2}\int_{B_{\delta}(y^0)}K_{1,\bar{\lambda}}(y,z)\min\{u^{p_{2}-1}_{0,\bar{\lambda}}(z),u^{p_{2}-1}(z)\}\left(u_{0,\bar{\lambda}}(z)-u(z)\right)dz>0,
\end{split}\end{equation}
thus we arrive at \eqref{positive}. Furthermore, \eqref{4-2} also implies that there exists a $0<\eta<\bar{\lambda}$ small enough such that, for any $y\in \overline{B_{\eta}(0)}\setminus\{0\}$,
\begin{equation}\label{2-38}
  \omega_{0,\bar{\lambda}}(y)\geq c_1p_{1}\int_{B_{\frac{\delta}{2}}(y^{0})}c_{6}c_{5}c_{4}^{p_{1}-1}c_{0}\, dz
  +c_2p_{2}\int_{B_{\frac{\delta}{2}}(y^{0})}c_{6}c_{3}^{p_{2}-1}c_{0}\, dz=:\widetilde{c}_{0}>0.
\end{equation}

Now we define
\begin{equation}\label{2-39}
  \tilde{l}_{0}:=\min_{\lambda\in[\bar{\lambda},2\bar{\lambda}]}l_{0}(0,\lambda)>0,
\end{equation}
where $l_{0}(0,\lambda)$ is given by Theorem \ref{Thm2}. For a fixed small $0<r_{0}<\frac{1}{2}\min\{\tilde{l}_{0},\bar{\lambda}\}$, by \eqref{positive} and \eqref{2-38}, we can define
\begin{equation}\label{2-40}
  m_{0}:=\inf_{y\in\overline{B_{\bar{\lambda}-r_{0}}(0)}\setminus\{0\}}\omega_{0,\bar{\lambda}}(y)>0.
\end{equation}

Since $u$ is uniformly continuous on arbitrary compact set $K\subset\mathbb{R}^{n}$ (say, $K=\overline{B_{4\bar{\lambda}}(0)}$), we can deduce from \eqref{2-40} that, there exists a $0<\varepsilon_{0}<\frac{1}{2}\min\{\tilde{l}_{0},\bar{\lambda}\}$ sufficiently small, such that, for any $\lambda\in[\bar{\lambda},\bar{\lambda}+\varepsilon_{0}]$,
\begin{equation}\label{2-41}
  \omega_{0,\lambda}(y)\geq\frac{m_{0}}{2}>0, \,\,\,\,\,\, \forall \, y\in\overline{B_{\bar{\lambda}-r_{0}}(0)}\setminus\{0\}.
\end{equation}
In order to prove \eqref{2-41}, one should observe that \eqref{2-40} is equivalent to
\begin{equation}\label{2-42}
  |y|^{n-\alpha}u(y)-\bar{\lambda}^{n-\alpha}u(y^{0,\bar{\lambda}})\geq m_{0}\bar{\lambda}^{n-\alpha}, \,\,\,\,\,\,\,\,\, \forall \, |y|\geq\frac{\bar{\lambda}^{2}}{\bar{\lambda}-r_{0}}.
\end{equation}
Since $u$ is uniformly continuous on $\overline{B_{4\bar{\lambda}}(0)}$, we infer from \eqref{2-42} that there exists a $0<\varepsilon_{0}<\frac{1}{2}\min\{\tilde{l}_{0},\bar{\lambda}\}$ sufficiently small, such that, for any $\lambda\in[\bar{\lambda},\bar{\lambda}+\varepsilon_{0}]$,
\begin{equation}\label{2-43}
  |y|^{n-\alpha}u(y)-\lambda^{n-\alpha}u(y^{0,\lambda})\geq \frac{m_{0}}{2}\lambda^{n-\alpha}, \,\,\,\,\,\,\,\,\, \forall \, |y|\geq\frac{\lambda^{2}}{\bar{\lambda}-r_{0}},
\end{equation}
which is equivalent to \eqref{2-41}, hence we have proved \eqref{2-41}.

For any $\lambda\in[\bar{\lambda},\bar{\lambda}+\varepsilon_{0}]$, let $l:=\lambda-\bar{\lambda}+r_{0}\in(0,\tilde{l}_{0})$ and $\Omega:=A_{\lambda,l}(0)$, then it follows from \eqref{2-1-8} and \eqref{2-41} that all the conditions \eqref{nrp} in Theorem \ref{Thm2} are fulfilled, hence we can deduce from (ii) in Theorem \ref{Thm2} that
\begin{equation}\label{2-44}
  \omega_{0,\lambda}(y)\geq0, \quad\quad \forall \,\, y\in \Omega=A_{\lambda,l}(0).
\end{equation}
Therefore, we get from \eqref{2-41} and \eqref{2-44} that, $B^{-}_{\lambda}=\emptyset$ for all $\lambda\in[\bar{\lambda},\bar{\lambda}+\varepsilon_{0}]$, that is,
\begin{equation}\label{2-45}
  \omega_{0,\lambda}(y)\geq0, \,\,\,\,\,\,\, \forall \,\, y\in B_{\lambda}(0)\setminus\{0\},
\end{equation}
which contradicts with the definition \eqref{defn} of $\bar{\lambda}(0)$. As a consequence, in the case $0<\bar{\lambda}(0)<+\infty$, we must have $\omega_{0,\bar{\lambda}}\equiv0$ in $B_{\bar{\lambda}}(0)\setminus\{0\}$, that is,
\begin{equation}\label{2-46}
  u_{0,\bar{\lambda}(0)}(y)\equiv u(y), \,\,\,\,\,\, \,\,\, \forall \,\, y\in B_{\bar{\lambda}}(0)\setminus\{0\}.
\end{equation}
This finishes our proof of Lemma \ref{lemmasequali}.
\end{proof}

We also need the following property about the limiting radius $\bar{\lambda}(x)$.
\begin{lem}\label{lemma0}
If $\bar{\lambda}(\bar{x})=+\infty$ for some $\bar{x}\in \mathbb{R}^n$, then $\bar{\lambda}(x)=+\infty$ for all $x\in \mathbb{R}^n$.
\end{lem}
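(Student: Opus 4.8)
The plan is to argue by contradiction: suppose $\bar\lambda(\bar x)=+\infty$ for some $\bar x\in\mathbb R^n$ but $\bar\lambda(x_0)<+\infty$ for some other $x_0\in\mathbb R^n$. I will read off from each hypothesis the precise decay rate of $u$ at infinity, and then observe that the two rates are mutually incompatible.

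First I would exploit $\bar\lambda(\bar x)=+\infty$. By definition this means $u_{\bar x,\mu}\ge u$ in $B_\mu(\bar x)\setminus\{\bar x\}$ for every $\mu>0$; since the anti-symmetry identity \eqref{2-0} holds for all $y\ne\bar x$ (not only inside $B_\mu(\bar x)$), this is equivalent to $u_{\bar x,\mu}(y)\le u(y)$ for all $|y-\bar x|\ge\mu$ and all $\mu>0$. Taking $\bar x=0$ without loss of generality and, given $0<s<r$, applying this inequality with $\mu=\sqrt{rs}\in(s,r)$ at a point $y=r\theta$ on the sphere of radius $r$ (so that $y^{0,\mu}=s\theta$), one gets $u(r\theta)\ge(s/r)^{\frac{n-\alpha}{2}}u(s\theta)$; that is, $r\mapsto r^{\frac{n-\alpha}{2}}u(r\theta)$ is nondecreasing for every unit vector $\theta$. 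By positivity and continuity of $u$ the quantity $c_0:=\min_{|\theta|=1}u(\theta)$ is strictly positive, whence $u(y)\ge c_0|y|^{-\frac{n-\alpha}{2}}$ for all $|y|\ge1$, a ``slow decay'' lower bound for $u$ near infinity.

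Next I would use $\bar\lambda(x_0)<+\infty$. By Lemma \ref{lemmasequali}, $u_{x_0,\bar\lambda(x_0)}\equiv u$ in $B_{\bar\lambda(x_0)}(x_0)\setminus\{x_0\}$, and the anti-symmetry \eqref{2-0} again promotes this to $u_{x_0,\bar\lambda(x_0)}(y)=u(y)$ for every $y\ne x_0$. Letting $|y|\to\infty$ we have $y^{x_0,\bar\lambda(x_0)}\to x_0$, so $|y-x_0|^{n-\alpha}u(y)=\bar\lambda(x_0)^{n-\alpha}u\big(y^{x_0,\bar\lambda(x_0)}\big)\to\bar\lambda(x_0)^{n-\alpha}u(x_0)$, a finite positive number since $u>0$ is continuous. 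In particular $u(y)\le C|y|^{-(n-\alpha)}$ for $|y|$ large.

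Finally I would combine the two estimates: for $|y|$ large, $c_0|y|^{-\frac{n-\alpha}{2}}\le u(y)\le C|y|^{-(n-\alpha)}$, hence $c_0|y|^{\frac{n-\alpha}{2}}\le C$; since $n>2\alpha$ gives $n-\alpha>0$, letting $|y|\to\infty$ yields a contradiction. Therefore $\bar\lambda(x_0)$ cannot be finite, i.e. $\bar\lambda(x)=+\infty$ for all $x\in\mathbb R^n$. The only point that requires genuine care is the reinterpretation step, turning ``$\omega_{x,\mu}\ge0$ inside $B_\mu(x)$'' into ``$u_{x,\mu}\le u$ outside $B_\mu(x)$'' via \eqref{2-0}, together with checking uniformity in the direction $\theta$, which is handled by compactness of the unit sphere; everything else is a direct comparison of decay rates.
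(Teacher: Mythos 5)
Your argument is correct and is essentially the paper's own: both deduce from $\bar\lambda(\bar x)=+\infty$ that $|y|^{n-\alpha}u(y)\to+\infty$, while from $\bar\lambda(x_0)<+\infty$ and Lemma \ref{lemmasequali} that $|y|^{n-\alpha}u(y)\to\bar\lambda(x_0)^{n-\alpha}u(x_0)<+\infty$, a contradiction. The only difference is that you supply the details behind the first limit (the monotonicity of $r\mapsto r^{\frac{n-\alpha}{2}}u(r\theta)$ giving the lower bound $u(y)\gtrsim|y|^{-\frac{n-\alpha}{2}}$, hence $|y|^{n-\alpha}u(y)\gtrsim|y|^{\frac{n-\alpha}{2}}\to\infty$), which the paper simply asserts as immediate.
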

\begin{proof}
Since $\bar{\lambda}(\bar{x})=+\infty$, recalling the definition of $\bar{\lambda}$, we get
$$u_{\bar{x},\lambda}(y)\geq u(y),\ \ \ \forall \,\, y\in B_{\lambda}(\bar{x})\setminus\{\bar{x}\}, \quad\,\, \forall \,\, 0<\lambda<+\infty.$$
That is,
$$u(y)\geq u_{\bar{x},\lambda}(y),\ \quad \ \forall \,\,|y-\bar{x}|\geq\lambda, \,\,\,\,\, \forall \,\, 0<\lambda<+\infty.$$
It follows immediately that
\begin{equation}\label{infinite}
\lim_{|y|\rightarrow\infty}|y|^{n-\alpha}u(y)=+\infty.
\end{equation}
On the other hand, if we assume $\bar{\lambda}(x)<+\infty$ for some $x\in \mathbb{R}^n$, then by Lemma \ref{lemmasequali}, one arrives at
$$\lim_{|y|\rightarrow\infty}|y|^{n-\alpha}u(y)=\lim_{|y|\rightarrow\infty}|y|^{n-\alpha}u_{x,\bar{\lambda}(x)}(y)=(\bar{\lambda}(x))^{n-\alpha}u(x)<+\infty,$$
which contradicts with \eqref{infinite}. This finishes the proof of Lemma \ref{lemma0}.
\end{proof}

In the following two subsections, we will carry out the proof of Theorem \ref{Thm0} by discussing the critical cases and subcritical cases separately.

\subsection{Classification of positive solutions in the critical case $c_{1}(1-p_{1})+c_{2}(\frac{n+\alpha}{n-\alpha}-p_{2})=0$}
Without loss of generality, we may assume that $c_{1}>0$ and $c_{2}>0$, that is, $p_{1}=1$ and $p_{2}=\frac{n+\alpha}{n-\alpha}$. The Schr\"{o}dinger-Hartree equation \eqref{PDEH} is conformally invariant in such cases.

We carry out the proof by discussing two different possible cases.

\emph{Case (i).}  $\bar{\lambda}(x)=+\infty$ for all $x\in\mathbb{R}^{n}$. Therefore, for all $x\in\mathbb{R}^{n}$ and $0<\lambda<+\infty$, we have
$$u_{x,\lambda}(y)\geq u(y),\ \ \ \,\,\, \forall \,\, y\in B_{\lambda}(x)\setminus\{x\}, \,\,\,\,\,\,\, \forall \,\, 0<\lambda<+\infty.$$
By a calculus Lemma (Lemma 11.2 in \cite{LZH}), we must have $u\equiv C>0$, which contradicts with the equation \eqref{PDEH}.

\medskip

\emph{Case (ii).} By \emph{Case (i)} and Lemma \ref{lemma0}, we only need to consider the cases that
$$\bar{\lambda}(x)<\infty\ \ \text{for all}\ \ x\in \mathbb{R}^n.$$
From Lemma \ref{lemmasequali}, we infer that
\begin{equation}\label{4-3}
  u_{x,\bar{\lambda}(x)}(y)=u(y), \qquad \forall \,\, y\in B_{\bar{\lambda}(x)}(x)\setminus\{x\}.
\end{equation}
Since equation \eqref{PDEH} is conformally invariant, from a calculus lemma (Lemma 11.1 in \cite{LZH}) and \eqref{4-3}, we deduce that, there exists some $\mu>0$ and $x_{0}\in\mathbb{R}^{n}$ such that
$$u(x)=C\left(\frac{\mu}{1+\mu^{2}|x-x_0|^2}\right)^{\frac{n-\alpha}{2}},\ \ \quad \forall \,\, x\in\mathbb{R}^n,$$
where the constant $C$ depends on $n,\alpha,c_{1},c_{2}$.

\subsection{Nonexistence of positive solutions in the subcritical case $c_{1}(1-p_{1})+c_{2}(\frac{n+\alpha}{n-\alpha}-p_{2})>0$}
Without loss of generality, we may assume that $c_{1}(1-p_{1})>0$ and $c_{2}(\frac{n+\alpha}{n-\alpha}-p_{2})\geq0$, that is, $c_{1}>0$, $c_{2}\geq0$, $0<p_{1}<1$ and $0<p_{2}\leq\frac{n+\alpha}{n-\alpha}$. The Schr\"{o}dinger-Hartree equation \eqref{PDEH} involves at least one subcritical nonlinearities in such cases.

We will obtain a contradiction in both the following two different possible cases.

\emph{Case (i).}  $\bar{\lambda}(x)=+\infty$ for all $x\in\mathbb{R}^{n}$. Therefore, for all $x\in\mathbb{R}^{n}$ and $0<\lambda<+\infty$, we have
$$u_{x,\lambda}(y)\geq u(y),\ \ \ \,\,\, \forall \,\, y\in B_{\lambda}(x)\setminus\{x\}, \,\,\,\,\,\,\, \forall \,\, 0<\lambda<+\infty.$$
By a calculus Lemma (Lemma 11.2 in \cite{LZH}), we must have $u\equiv C>0$, which contradicts with the equation \eqref{PDEH}.

\medskip

\emph{Case (ii).} By \emph{Case (i)} and Lemma \ref{lemma0}, we only need to consider the cases that
$$\bar{\lambda}(x)<\infty\ \ \text{for all}\ \ x\in \mathbb{R}^n.$$
From Lemma \ref{lemmasequali}, we infer that
\begin{equation}\label{4-4}
  u_{x,\bar{\lambda}(x)}(y)=u(y), \qquad \forall \,\, y\in B_{\bar{\lambda}(x)}(x)\setminus\{x\}.
\end{equation}

Consider $x=0$, one can derive from \eqref{4-2} and \eqref{4-4} that
\begin{equation}\label{4-5}\begin{split}
0=\omega_{0,\bar{\lambda}}(y)&=c_1\int_{B_{\bar{\lambda}}(0)}K_{1,\bar{\lambda}}(y,z)P(z)
\left(\left(\frac{\bar{\lambda}}{|z|}\right)^{\tau_{1}}u^{p_{1}}_{0,\bar{\lambda}}(z)-u^{p_{1}}(z)\right)dz\\
&\ \ \ +c_1\int_{B_{\bar{\lambda}}(0)}K_{1,\bar{\lambda}}(y,z)(\bar{P}_{0,\bar{\lambda}}(z)-P(z))\left(\frac{\bar{\lambda}}{|z|}\right)^{\tau_{1}}u^{p_{1}}_{0,\bar{\lambda}}(z)dz\\
&\ \ \ +c_2\int_{B_{\bar{\lambda}}(0)}K_{1,\bar{\lambda}}(y,z)\left(\left(\frac{\bar{\lambda}}{|z|}\right)^{\tau_{2}}u^{p_{2}}_{0,\bar{\lambda}}(z)-u^{p_{2}}(z)\right)dz\\
&=c_1\int_{B_{\bar{\lambda}}(0)}K_{1,\bar{\lambda}}(y,z)P(z)
\left(\left(\frac{\bar{\lambda}}{|z|}\right)^{\tau_{1}}-1\right)u^{p_{1}}(z)dz \\
&\quad+c_2\int_{B_{\bar{\lambda}}(0)}K_{1,\bar{\lambda}}(y,z)\left(\left(\frac{\bar{\lambda}}{|z|}\right)^{\tau_{2}}-1\right)u^{p_{2}}(z)dz,
\end{split}\end{equation}
where
$$\bar{P}_{0,\bar{\lambda}}(z)-P(z)=\int_{B_{\bar{\lambda}}(0)}K_{2,\bar{\lambda}}(z,\xi)\big(u^2_{0,\bar{\lambda}}(\xi)-u^2(\xi)\big)d\xi=0,$$
and $\tau_{1}=(n-\alpha)(1-p_{1})>0$, $\tau_{2}=(n+\alpha)-p_{2}(n-\alpha)\geq0$.
As a consequence, it follows immediately that
$$0\geq c_1\int_{B_{\bar{\lambda}}(0)}K_{1,\bar{\lambda}}(y,z)P(z)
\left(\left(\frac{\bar{\lambda}}{|z|}\right)^{\tau_{1}}-1\right)u^{p_{1}}(z)dz>0,$$
which is absurd.

Thus we have ruled out both the \emph{Case (i)} and \emph{Case (ii)}, and hence \eqref{PDEH} does not admit any positive solutions. Therefore, the unique nonnegative solution to \eqref{PDEH} is $u\equiv0$.

This concludes our proof of Theorem \ref{Thm0}.

\section{Proof of Theorem \ref{Thm1}}

Theorem \ref{Thm1} can be proved in a quite similar way as the proof of Theorem \ref{Thm0}, thus we will only mention some main ingredients in its proof.

First, Suppose $u$ is a nonnegative classical solution of the Schr\"{o}dinger-Maxwell equation \eqref{PDEM} which is not identically zero. It follows immediately that $u>0$ in $\mathbb{R}^{n}$ and $\int_{\mathbb{R}^{n}}\frac{u^{\frac{n+\alpha}{n-\alpha}}(x)}{|x|^{n-\alpha}}dx<+\infty$. Then, one can verify that $u_{x,\lambda}\in\mathcal{L}_{\alpha}(\mathbb{R}^{n})\cap C^{1,1}_{loc}(\mathbb{R}^{n}\setminus\{x\})$ if $0<\alpha<2$ ($u_{x,\lambda}\in C^{2}(\mathbb{R}^{n}\setminus\{x\})$ if $\alpha=2$) satisfies the integral property $$\int_{\mathbb{R}^{n}}\frac{u_{x,\lambda}^{\frac{n+\alpha}{n-\alpha}}(y)}{\lambda^{n-\alpha}}dy
=\int_{\mathbb{R}^{n}}\frac{u^{\frac{n+\alpha}{n-\alpha}}(x)}{|x|^{n-\alpha}}dx<+\infty$$
and a similar equation as $u$ for any $x\in\mathbb{R}^n$ and $\lambda>0$. In fact, without loss of generality, we may assume $x=0$ for simplicity and get
\begin{eqnarray}\label{3-1-1}
 \nonumber &&(-\Delta)^{\frac{\alpha}{2}}u_{0,\lambda}(y)=\frac{\lambda^{n+\alpha}}{|y|^{n+\alpha}}(-\Delta)^{\frac{\alpha}{2}}u\Big(\frac{\lambda^2y}{|y|^{2}}\Big)\\
\nonumber &=&c_1\frac{\lambda^{n+\alpha}}{|y|^{n+\alpha}}
\int_{\mathbb{R}^{n}}\frac{|u(z)|^{\frac{n+\alpha}{n-\alpha}}}{\big|\frac{\lambda^2y}{|y|^{2}}-z\big|^{n-\alpha}}dz
  \cdot u^{q_{1}}\Big(\frac{\lambda^2y}{|y|^{2}}\Big)+c_2\frac{\lambda^{n+\alpha}}{|y|^{n+\alpha}}u^{q_{2}}\left(\frac{\lambda^2y}{|y|^2}\right) \\
 \nonumber &=&c_1\frac{\lambda^{n+\alpha}}{|y|^{n+\alpha}}\int_{\mathbb{R}^{n}}\frac{\lambda^{2n}|z|^{-2n}}{\big|\frac{\lambda^2y}
 {|y|^{2}}-\frac{\lambda^2z}{|z|^{2}}\big|^{n-\alpha}}
 \Big|u\Big(\frac{\lambda^2z}{|z|^{2}}\Big)\Big|^{\frac{n+\alpha}{n-\alpha}}dz\cdot u^{q_{1}}\Big(\frac{\lambda^2y}{|y|^{2}}\Big)+ c_2\left(\frac{\lambda}{|y|}\right)^{\sigma_{2}}u_{0,\lambda}^{q_{2}}(y)\\
 \nonumber &=& c_1\left(\frac{\lambda}{|y|}\right)^{\sigma_{1}}\bigg[\frac{1}{|\cdot|^{n-\alpha}}\ast|u_{0,\lambda}|^{\frac{n+\alpha}{n-\alpha}}\bigg](y)
 u^{q_{1}}_{0,\lambda}(y)+ c_2\left(\frac{\lambda}{|y|}\right)^{\sigma_{2}}u_{0,\lambda}^{q_{2}}(y),
\end{eqnarray}
this means, the conformal transforms $u_{x,\lambda}\in\mathcal{L}_{\alpha}(\mathbb{R}^{n})\cap C^{1,1}_{loc}(\mathbb{R}^{n}\setminus\{x\})$ ($u_{x,\lambda}\in C^{2}(\mathbb{R}^{n}\setminus\{x\})$ if $\alpha=2$) satisfies
\begin{equation}\label{3-1-2}
   (-\Delta)^{\frac{\alpha}{2}}u_{x,\lambda}(y)=c_1\left(\frac{\lambda}{|y-x|}\right)^{\sigma_{1}}\bigg(\frac{1}{|\cdot|^{n-\alpha}}\ast u_{x,\lambda}^{\frac{n+\alpha}{n-\alpha}}\bigg)u^{q_{1}}_{x,\lambda}(y)+ c_2\left(\frac{\lambda}{|y-x|}\right)^{\sigma_{2}}u_{x,\lambda}^{q_{2}}(y)
\end{equation}
for every $y\in\mathbb{R}^{n}\setminus\{x\}$, where $\sigma_{1}:=2\alpha-q_{1}(n-\alpha)\geq0$ and $\sigma_{2}:=(n+\alpha)-q_{2}(n-\alpha)\geq0$. Similar to Lemma \ref{equivalent}, we can also show that the nonnegative solution $u$ to \eqref{PDEM} also satisfies the following equivalent integral equation
\begin{equation}\label{IEM}
  u(y)=\int_{\mathbb{R}^{n}}\frac{c_1R_{\alpha,n}}{|y-z|^{n-\alpha}}\bigg(\int_{\mathbb{R}^{n}}\frac{|u(\xi)|^{\frac{n+\alpha}{n-\alpha}}}{|z-\xi|^{n-\alpha}}d\xi\bigg)u^{q_{1}}(z)dz
  +\int_{\mathbb{R}^{n}}\frac{c_2R_{\alpha,n}}{|y-z|^{n-\alpha}}u^{q_{2}}(z)dz,
\end{equation}
and vice versa.

Second, we define
$$Q(y):=\bigg(\frac{1}{|\cdot|^{n-\alpha}}\ast u^{\frac{n+\alpha}{n-\alpha}}\bigg)(y), \qquad \widetilde{Q}_{x,\lambda}(y):=\int_{B_{\lambda}(x)}\frac{u^{\frac{2\alpha}{n-\alpha}}(z)}{|y-z|^{n-\alpha}}dz.$$
We can prove the following \emph{Narrow region principle} through a quite similar way as the proof of Theorem \ref{Thm2} in Section 2.
\begin{thm}\label{Thm3}(Narrow region principle)
Assume $x\in\mathbb{R}^{n}$ is arbitrarily fixed. Let $\Omega$ be a narrow region in $B_{\lambda}(x)\setminus\{x\}$ with small thickness $0<l<\lambda$ such that $\Omega\subseteq A_{\lambda,l}(x):=\{y\in\mathbb{R}^n|\,\lambda-l<|y-x|<\lambda\}$. Suppose $\omega_{x,\lambda}\in\mathcal{L}_{\alpha}(\mathbb{R}^{n})\cap C^{1,1}_{loc}(\Omega)$ if $0<\alpha<2$ ($\omega_{x,\lambda}\in C^{2}(\Omega)$ if $\alpha=2$) and satisfies
\begin{equation}\label{nrp-3}\\\begin{cases}
(-\Delta)^{\frac{\alpha}{2}}\omega_{x,\lambda}(y)-\widetilde{\mathcal{L}}_{x,\lambda}(y)\omega_{x,\lambda}(y)
-c_1\frac{n+\alpha}{n-\alpha}\int_{B^{-}_{\lambda}}\frac{u^{\frac{2\alpha}{n-\alpha}}(z)\omega_{x,\lambda}(z)}{|y-z|^{n-\alpha}}dz \, u^{q_{1}}(y)\geq0 \,\,\,\,\, \text{in} \,\,\, \Omega\cap B^{-}_{\lambda},\\
\text{negative minimum of} \,\, \omega_{x,\lambda}\,\, \text{is attained in the interior of}\,\, B_{\lambda}(x)\setminus\{x\} \,\,\text{if} \,\,\, B^{-}_{\lambda}\neq\emptyset,\\
\text{negative minimum of} \,\,\, \omega_{x,\lambda} \,\,\, \text{cannot be attained in} \,\,\, (B_{\lambda}(x)\setminus\{x\})\setminus\Omega,
\end{cases}\end{equation}
where $\widetilde{\mathcal{L}}_{x,\lambda}(y):=c_{1}q_{1}Q(y)\max\left\{u^{q_{1}-1}(y),u_{x,\lambda}^{q_{1}-1}(y)\right\}
+c_{2}q_{2}\max\left\{u^{q_{2}-1}(y),u_{x,\lambda}^{q_{2}-1}(y)\right\}$. Then, we have \\
(i) there exists a sufficiently small constant $\delta_{0}(x)>0$, such that, for all $0<\lambda\leq\delta_{0}$,
\begin{equation}\label{nrp1-3}
  \omega_{x,\lambda}(y)\geq0, \,\,\,\,\,\, \forall \,y\in\Omega;
\end{equation}
(ii) there exists a sufficiently small $l_{0}(x,\lambda)>0$ depending on $\lambda$ continuously, such that, for all $0<l\leq l_{0}$,
\begin{equation}\label{nrp2-3}
  \omega_{x,\lambda}(y)\geq0, \,\,\,\,\,\, \forall \,y\in\Omega.
\end{equation}
\end{thm}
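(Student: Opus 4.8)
The plan is to run the argument of Theorem \ref{Thm2} essentially line by line, with the Hartree kernel exponent $2\alpha$ replaced by the Maxwell exponent $n-\alpha$ and the convolved density $u^{2}$ replaced by $u^{\frac{n+\alpha}{n-\alpha}}$ (so that differentiating the Maxwell nonlinearity produces the factor $\frac{n+\alpha}{n-\alpha}$ and the density $u^{\frac{2\alpha}{n-\alpha}}$ that appear in \eqref{nrp-3}). Without loss of generality take $x=0$ and argue by contradiction: if \eqref{nrp1-3} (for (i)) or \eqref{nrp2-3} (for (ii)) fails, the second and third lines of \eqref{nrp-3} force the negative minimum of $\omega_{0,\lambda}$ over $B_{\lambda}(0)\setminus\{0\}$ to be attained at some $\tilde y\in\Omega\cap B^{-}_{\lambda}\subseteq A_{\lambda,l}(0)$.

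First I would derive, exactly as for \eqref{small}, a one-sided estimate for the fractional Laplacian at a near-minimum point. For $0<\alpha<2$ one sets $\tilde\omega_{0,\lambda}=\omega_{0,\lambda}-\omega_{0,\lambda}(\tilde y)$, splits the principal-value integral defining $(-\Delta)^{\alpha/2}\tilde\omega_{0,\lambda}(\tilde y)$ over $B_{\lambda}(0)$ and its complement, substitutes $z\mapsto z^{0,\lambda}$ in the outer piece, and uses the anti-symmetry $\omega_{0,\lambda}=-(\omega_{0,\lambda})_{0,\lambda}$ together with the elementary identity
\[
\Bigl|\tfrac{|z|\tilde y}{\lambda}-\tfrac{\lambda z}{|z|}\Bigr|^{2}-|\tilde y-z|^{2}=\frac{(|\tilde y|^{2}-\lambda^{2})(|z|^{2}-\lambda^{2})}{\lambda^{2}}>0
\]
and $\omega_{0,\lambda}(\tilde y)<0$; restricting the leftover integral to $(\mathbb R^{n}\setminus B_{\lambda}(0))\cap(B_{4l}(\tilde y)\setminus B_{l}(\tilde y))$ then yields $(-\Delta)^{\alpha/2}\omega_{0,\lambda}(\tilde y)\le\frac{C}{l^{\alpha}}\omega_{0,\lambda}(\tilde y)<0$. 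For $\alpha=2$ one instead introduces $\phi(y)=\cos\frac{|y|-\lambda+l}{l}$, works with $\overline{\omega_{0,\lambda}}=\omega_{0,\lambda}/\phi$, and extracts $-\Delta\omega_{0,\lambda}(y_{0})\le\frac{1}{l^{2}}\omega_{0,\lambda}(y_{0})$ at the minimum $y_{0}$ of $\overline{\omega_{0,\lambda}}$. In both cases we obtain some $\hat y\in\Omega\cap B^{-}_{\lambda}$ with $(-\Delta)^{\alpha/2}\omega_{0,\lambda}(\hat y)\le\frac{C}{l^{\alpha}}\omega_{0,\lambda}(\hat y)<0$.

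Next I would play this against \eqref{nrp-3} evaluated at $\hat y$: since $\omega_{0,\lambda}(z)\ge\omega_{0,\lambda}(\hat y)<0$ on $B^{-}_{\lambda}$, the nonlocal integral term is absorbed into $\widetilde Q_{0,\lambda}$, giving
\[
0\le(-\Delta)^{\alpha/2}\omega_{0,\lambda}(\hat y)-c_{0,\lambda}(\hat y)\,\omega_{0,\lambda}(\hat y),\qquad c_{x,\lambda}(y):=c_{1}q_{1}Q(y)u^{q_{1}-1}(y)+c_{2}q_{2}u^{q_{2}-1}(y)+c_{1}\tfrac{n+\alpha}{n-\alpha}\widetilde Q_{x,\lambda}(y)u^{q_{1}}(y)>0 .
\]
Splitting $\{|y-z|<\tfrac{|z|}{2}\}\cup\{|y-z|\ge\tfrac{|z|}{2}\}$, for $\lambda-l<|y|<\lambda$ one gets
\[
Q(y)\le C\lambda^{\alpha}\Bigl[\max_{|z|\le2\lambda}u(z)\Bigr]^{\frac{n+\alpha}{n-\alpha}}+2^{n-\alpha}\int_{\mathbb R^{n}}\frac{u^{\frac{n+\alpha}{n-\alpha}}(z)}{|z|^{n-\alpha}}\,dz=:C'_{\lambda},\qquad \widetilde Q_{0,\lambda}(y)\le C\lambda^{\alpha}\Bigl[\max_{|z|\le2\lambda}u^{\frac{2\alpha}{n-\alpha}}(z)\Bigr]=:C''_{\lambda},
\]
both finite because $n>\alpha$ makes $|y-z|^{-(n-\alpha)}$ locally integrable and because $\int_{\mathbb R^{n}}u^{\frac{n+\alpha}{n-\alpha}}|x|^{-(n-\alpha)}\,dx<\infty$ as noted at the start of Section 3; since $u$ is bounded and bounded away from $0$ on $\overline{B_{\lambda}(0)}$, the powers $u^{q_{1}-1}$ and $u^{q_{2}-1}$ are bounded there, so $0<c_{0,\lambda}(y)\le C_{\lambda}$ with $C'_{\lambda},C''_{\lambda},C_{\lambda}$ continuous and monotone increasing in $\lambda$. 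Combining with the previous step and $\omega_{0,\lambda}(\hat y)<0$ gives $\frac{C}{\lambda^{\alpha}}\le\frac{C}{l^{\alpha}}\le C_{\lambda}$, which is impossible once $\lambda\le\delta_{0}(x)$ is small (yielding (i)) and, for each fixed $\lambda$, once $l\le l_{0}(x,\lambda)$ is small with $l_{0}$ continuous in $\lambda$ (yielding (ii)); the third line of \eqref{nrp-3} then upgrades $\omega_{x,\lambda}\ge0$ on $\Omega$ to $\omega_{x,\lambda}\ge0$ on all of $B_{\lambda}(x)\setminus\{x\}$.

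The only step I expect to require more than transcription from the proof of Theorem \ref{Thm2} is the pair of uniform bounds on $Q$ and $\widetilde Q_{x,\lambda}$: one must verify that the Maxwell density power $\frac{n+\alpha}{n-\alpha}$, the kernel exponent $n-\alpha$, and the a priori finite weighted integral of $u$ fit together, and that the near-diagonal contribution $\int_{|y-z|<2\lambda}|y-z|^{-(n-\alpha)}\,dz\asymp\lambda^{\alpha}$ is finite — which is precisely where the hypothesis $n>\alpha$ (in place of $n>2\alpha$) enters. Everything else — the anti-symmetry split, the reflection identity, the weight $\phi$ in the case $\alpha=2$, and the final contradiction $\frac{C}{l^{\alpha}}\le C_{\lambda}$ — is formally identical to the corresponding parts of Section 2.
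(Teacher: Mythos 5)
Your proposal is correct and follows essentially the same route as the paper, which itself simply says that Theorem \ref{Thm3} is proved ``in a quite similar way as the proof of Theorem \ref{Thm2}'' and then records only the two key estimates \eqref{nrp-5-3} and \eqref{nrp-6-3} on $Q$ and $\widetilde Q_{0,\lambda}$ that you reproduce (with the same split $\{|y-z|<|z|/2\}\cup\{|y-z|\ge|z|/2\}$, the same $\lambda^\alpha$ scaling of the near-diagonal piece, and the same use of $\int_{\mathbb R^n}u^{\frac{n+\alpha}{n-\alpha}}|x|^{-(n-\alpha)}\,dx<\infty$). You also correctly identify that the exponent $\frac{n+\alpha}{n-\alpha}$ in the nonlocal term, the replacement of the Hartree kernel $|\cdot|^{-2\alpha}$ by the Riesz kernel $|\cdot|^{-(n-\alpha)}$, and the weaker hypothesis $n>\alpha$ (as opposed to $n>2\alpha$) are exactly what is needed to make the absorbed coefficient $c_{x,\lambda}(y)=\widetilde{\mathcal L}(y)+c_1\frac{n+\alpha}{n-\alpha}\widetilde Q_{x,\lambda}(y)u^{q_1}(y)$ finite and continuous and monotone in $\lambda$, so that the final contradiction $\frac{C}{l^\alpha}\le C_\lambda$ goes through; this is exactly what the paper's abbreviated proof is leaving to the reader.
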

\begin{proof}
Without loss of generality, we may assume $x=0$ here for simplicity. Theorem \ref{Thm3} can be proved in a quite similar way as the proof of Theorem \ref{Thm2}, thus we will only mention the following key estimates for $Q(y)$ and $\widetilde{Q}_{0,\lambda}(y)$ for any $y\in A_{\lambda,l}(0)$.

Indeed, since $\lambda-l<|y|<\lambda$, we have
\begin{eqnarray}\label{nrp-5-3}
  Q(y) &\leq & \left\{\int_{|y-z|<\frac{|z|}{2}}+\int_{|y-z|\geq\frac{|z|}{2}}\right\}\frac{u^{\frac{n+\alpha}{n-\alpha}}(z)}{|y-z|^{n-\alpha}}dz \\
 \nonumber &\leq& \Big[\max_{|y|\leq2\lambda}u(y)\Big]^{\frac{n+\alpha}{n-\alpha}}\int_{|y-z|<\lambda}\frac{1}{|y-z|^{n-\alpha}}dz
 +2^{n-\alpha}\int_{\mathbb{R}^{n}}\frac{u^{\frac{n+\alpha}{n-\alpha}}(z)}{|z|^{n-\alpha}}dz \\
 \nonumber &\leq& C\lambda^{\alpha}\Big[\max_{|y|\leq2\lambda}u(y)\Big]^{\frac{n+\alpha}{n-\alpha}}
 +2^{n-\alpha}\int_{\mathbb{R}^{n}}\frac{u^{\frac{n+\alpha}{n-\alpha}}(x)}{|x|^{n-\alpha}}dx=:\widetilde{C'_{\lambda}},
\end{eqnarray}
and
\begin{eqnarray}\label{nrp-6-3}
\widetilde{Q}_{0,\lambda}(y)&\leq&\int_{|y-z|<2\lambda}\frac{1}{|y-z|^{n-\alpha}}u^{\frac{2\alpha}{n-\alpha}}(z)dz\\
\nonumber &\leq&C\lambda^{\alpha}\Big[\max_{|y|\leq4\lambda}u(y)\Big]^{\frac{2\alpha}{n-\alpha}}=:\widetilde{C''_{\lambda}}.
\end{eqnarray}
It is obvious that $\widetilde{C'_{\lambda}}$ and $\widetilde{C''_{\lambda}}$ depend on $\lambda$ continuously and monotone increasing with respect to $\lambda>0$.

The rest of the proof is completely similar to the proof of Theorem \ref{Thm2}, so we omit the details. This finishes our proof of Theorem \ref{Thm3}.
\end{proof}

Third, for each fixed $x\in \mathbb{R}^n$, we define the limiting radius by
\begin{equation}\label{defn-3}
  \bar{\lambda}(x)=\sup\{\lambda>0 \,|\, u_{x,\mu}\geq u\,\, \text{in} \,\, B_{\mu}(x)\setminus\{x\},\,\, \forall \,\, 0<\mu\leq\lambda\}\in(0,+\infty].
\end{equation}
Then, similar to Lemma \ref{lemmasequali} in Section 2, we also need the following Lemma, which is crucial in our proof.
\begin{lem}\label{lemmasequali-3}
If $\bar{\lambda}(\bar{x})<+\infty$ for some $\bar{x}\in \mathbb{R}^n$, then
\begin{equation*}\label{equality}
u_{\bar{x},\bar{\lambda}(\bar{x})}(y)=u(y),\,\,\,\,\,\,\,\, \forall \,\, y\in B_{\bar{\lambda}}(\bar{x})\setminus \{\bar{x}\}.
\end{equation*}
\end{lem}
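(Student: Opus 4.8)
The plan is to mimic verbatim the proof of Lemma \ref{lemmasequali}, since the nonlocal Schr\"{o}dinger-Maxwell equation \eqref{PDEM} differs from \eqref{PDEH} only in the exponent of the Riesz kernel in the convolution ($n-\alpha$ instead of $2\alpha$) and in the power $u^{\frac{n+\alpha}{n-\alpha}}$ appearing inside the nonlocal term. First I would reduce, without loss of generality, to $\bar{x}=0$ and argue by contradiction: assume $\omega_{0,\bar{\lambda}}\geq 0$ in $B_{\bar{\lambda}}(0)\setminus\{0\}$ but $\omega_{0,\bar{\lambda}}\not\equiv 0$. Using the integral equation \eqref{IEM} and the analogue of \eqref{2-1-41} (which expresses $u_{0,\bar{\lambda}}$ via the transformed kernel), I would split both $u(y)$ and $u_{0,\bar{\lambda}}(y)$ into integrals over $B_{\bar{\lambda}}(0)$ and its exterior, then fold the exterior piece back into $B_{\bar{\lambda}}(0)$ through the inversion $z\mapsto z^{\bar\lambda}$. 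Subtracting yields a representation
\begin{equation*}
\omega_{0,\bar{\lambda}}(y)=c_1\!\int_{B_{\bar{\lambda}}(0)}\!\widetilde{K}_{1,\bar{\lambda}}(y,z)\Big[\Big(\tfrac{\bar\lambda}{|z|}\Big)^{\sigma_1}\widetilde{Q}_{\text{fact}}u^{q_1}_{0,\bar\lambda}(z)-Qu^{q_1}(z)\Big]dz+c_2\!\int_{B_{\bar{\lambda}}(0)}\!\widetilde{K}_{1,\bar{\lambda}}(y,z)\Big[\Big(\tfrac{\bar\lambda}{|z|}\Big)^{\sigma_2}u^{q_2}_{0,\bar\lambda}(z)-u^{q_2}(z)\Big]dz,
\end{equation*}
where $\widetilde{K}_{1,\bar{\lambda}}(y,z)=R_{\alpha,n}\big(|y-z|^{-(n-\alpha)}-\big|\tfrac{y|z|}{\bar\lambda}-\tfrac{\bar\lambda z}{|z|}\big|^{-(n-\alpha)}\big)>0$, and the analogous positive kernel with exponent $n-\alpha$ in the convolution governs the difference $\bar Q_{0,\bar\lambda}(z)-Q(z)>0$.

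Next, since $\omega_{0,\bar\lambda}\geq 0$ forces $u_{0,\bar\lambda}\geq u$ on $B_{\bar\lambda}(0)$ and the reflected factors $(\bar\lambda/|z|)^{\sigma_i}\geq 1$ there, every bracket in the representation is nonnegative; using the mean value theorem on $t\mapsto t^{q_1}$ and $t\mapsto t^{q_2}$ (with the usual care that $0<q_1\le\frac{2\alpha}{n-\alpha}$, $0<q_2\le\frac{n+\alpha}{n-\alpha}$ may be less than $1$, handled by $\min\{u^{q_i-1}_{0,\bar\lambda},u^{q_i-1}\}$ as in \eqref{4-2}) I would show that if $\omega_{0,\bar\lambda}(y^0)>0$ at one point then $\omega_{0,\bar\lambda}(y)>0$ everywhere in $B_{\bar\lambda}(0)\setminus\{0\}$, and moreover there is a uniform lower bound $\omega_{0,\bar\lambda}(y)\geq\widetilde c_0>0$ on a punctured neighborhood $\overline{B_\eta(0)}\setminus\{0\}$, exactly as in \eqref{positive}--\eqref{2-38}. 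This strict positivity lets me define $m_0:=\inf_{\overline{B_{\bar\lambda-r_0}(0)}\setminus\{0\}}\omega_{0,\bar\lambda}>0$ for a small fixed $r_0$ below $\tfrac12\min\{\tilde l_0,\bar\lambda\}$, where $\tilde l_0:=\min_{\lambda\in[\bar\lambda,2\bar\lambda]}l_0(0,\lambda)>0$ comes from Theorem \ref{Thm3}.

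Then I would run the continuity/perturbation step: rewriting $m_0\le\omega_{0,\bar\lambda}$ on $\overline{B_{\bar\lambda-r_0}(0)}\setminus\{0\}$ as the pointwise inequality $|y|^{n-\alpha}u(y)-\bar\lambda^{n-\alpha}u(y^{0,\bar\lambda})\ge m_0\bar\lambda^{n-\alpha}$ for $|y|\ge\bar\lambda^2/(\bar\lambda-r_0)$, and invoking uniform continuity of $u$ on $\overline{B_{4\bar\lambda}(0)}$, I get that for all $\lambda\in[\bar\lambda,\bar\lambda+\varepsilon_0]$ with $\varepsilon_0$ small, $\omega_{0,\lambda}\ge m_0/2>0$ on $\overline{B_{\bar\lambda-r_0}(0)}\setminus\{0\}$. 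Taking $l:=\lambda-\bar\lambda+r_0\in(0,\tilde l_0)$ and $\Omega:=A_{\lambda,l}(0)$, the Schr\"{o}dinger-Maxwell analogue of the differential inequality \eqref{2-1-8} (derived as in the displayed computation preceding \eqref{2-1-8}, now with kernel exponent $n-\alpha$ and the factor $c_1\frac{n+\alpha}{n-\alpha}$ in front of the convolution term, matching \eqref{nrp-3}) holds in $\Omega\cap B^-_\lambda$, so part (ii) of Theorem \ref{Thm3} gives $\omega_{0,\lambda}\ge 0$ on $\Omega$, hence on all of $B_\lambda(0)\setminus\{0\}$, for every $\lambda\in[\bar\lambda,\bar\lambda+\varepsilon_0]$. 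This contradicts the definition \eqref{defn-3} of $\bar\lambda(0)$, so $\omega_{0,\bar\lambda}\equiv 0$, i.e.\ $u_{0,\bar\lambda(0)}\equiv u$ on $B_{\bar\lambda}(0)\setminus\{0\}$.

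The main obstacle I anticipate is purely bookkeeping rather than conceptual: verifying that the integral representation of $\omega_{0,\bar\lambda}$ still decomposes into manifestly nonnegative pieces when $q_1<1$ and/or $q_2<1$, so that the mean value theorem argument for strict positivity in \eqref{positive} goes through — this is where the exponent restrictions $q_1\le\frac{2\alpha}{n-\alpha}$, $q_2\le\frac{n+\alpha}{n-\alpha}$ (guaranteeing $\sigma_1,\sigma_2\ge 0$) are essential — together with confirming that the convolution term $\bar Q_{0,\bar\lambda}-Q$ is positive under the reflection, which follows since $K_{2}$-type kernels with exponent $n-\alpha$ are positive for $|y|,|z|<\bar\lambda$ exactly as the $2\alpha$-kernels were. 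Everything else is a line-by-line transcription of the proof of Lemma \ref{lemmasequali}.
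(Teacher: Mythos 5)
Your proposal is correct and follows essentially the same route as the paper, which likewise derives the integral representation of $\omega_{0,\bar{\lambda}}$ over $B_{\bar{\lambda}}(0)$ with the positive kernel $K_{1,\bar{\lambda}}$ (exponent $n-\alpha$), establishes strict positivity via the mean value theorem with the $\min\{u_{0,\bar\lambda}^{q_i-1},u^{q_i-1}\}$ factors, and then defers the continuity and narrow-region perturbation step to the argument of Lemma \ref{lemmasequali} invoking part (ii) of Theorem \ref{Thm3}. The only cosmetic discrepancy is that what you call a ``$K_2$-type kernel with exponent $n-\alpha$'' coincides here with $K_{1,\bar\lambda}$ itself, since in the Schr\"odinger--Maxwell setting both the Riesz potential and the inner convolution carry the same exponent $n-\alpha$.
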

\begin{proof}
Without loss of generality, we may assume $x=0$ for simplicity. Since $u$ is a positive solution to integral equation \eqref{IEM}, one can verify that $u_{0,\lambda}$ also satisfies a similar integral equation as \eqref{IEM} in $\mathbb{R}^{n}\setminus\{0\}$. In fact, by \eqref{IEM} and direct calculations, we have, for any $y\in\mathbb{R}^{n}\setminus\{0\}$,
\begin{eqnarray}\label{3-1-41}
   &&u_{0,\lambda}(y)=\left(\frac{\lambda}{|y|}\right)^{n-\alpha}u\left(\frac{\lambda^{2}y}{|y^{2}|}\right) \\
  \nonumber  &=& \int_{\mathbb{R}^{n}}\frac{R_{\alpha,n}}{|y-z|^{n-\alpha}}\left[c_{1}\left(\frac{\lambda}{|z|}\right)^{\sigma_{1}}
  \left(\int_{\mathbb{R}^{n}}\frac{u_{0,\lambda}^{\frac{n+\alpha}{n-\alpha}}(\xi)}{|z-\xi|^{n-\alpha}}d\xi\right)u^{q_{1}}_{0,\lambda}(z)
  +c_{2}\left(\frac{\lambda}{|z|}\right)^{\sigma_{2}}u_{0,\lambda}^{q_{2}}(z)\right]dz,
\end{eqnarray}
where $\sigma_{1}:=2\alpha-q_{1}(n-\alpha)\geq0$ and $\sigma_{2}:=(n+\alpha)-q_{2}(n-\alpha)\geq0$.

Suppose on the contrary that $\omega_{0,\bar{\lambda}}\geq0$ but $\omega_{0,\bar{\lambda}}$ is not identically zero in $B_{\bar{\lambda}}(0)\setminus\{0\}$,
then we will get a contradiction with the definition \eqref{defn-3} of $\bar{\lambda}$.

We first prove that
\begin{equation}\label{positive-3}
  \omega_{0,\bar{\lambda}}(y)>0, \,\,\,\,\,\, \forall \, y\in B_{\bar{\lambda}}(0)\setminus\{0\}.
\end{equation}
Indeed, if there exists a point $y^{0}\in B_{\bar{\lambda}}(0)\setminus\{0\}$ such that $\omega_{0,\bar{\lambda}}(y^{0})>0$, by continuity, there exists a small $\delta>0$ and a constant $c_{0}>0$ such that
\begin{equation*}\label{3-1-43}
B_{\delta}(y^{0})\subset B_{\bar{\lambda}}(0)\setminus\{0\} \,\,\,\,\,\, \text{and} \,\,\,\,\,\,
\omega_{0,\bar{\lambda}}(y)\geq c_{0}>0, \,\,\,\, \forall \, y\in B_{\delta}(y^{0}).
\end{equation*}
For any $y\in B_{\bar{\lambda}}(0)\setminus\{0\}$, by \eqref{IEM}, \eqref{3-1-41} and direct calculations, one can derive that
\begin{equation*}\begin{split}
u(y)&=\int_{\mathbb{R}^{n}}\frac{c_1R_{\alpha,n}}{|y-z|^{n-\alpha}}Q(z)u^{q_{1}}(z)dz
  +\int_{\mathbb{R}^{n}}\frac{c_2R_{\alpha,n}}{\big|y-z\big|^{n-\alpha}}u^{q_{2}}(z)dz\\
&=\int_{B_{\bar{\lambda}}(0)}\frac{c_1R_{\alpha,n}}{|y-z|^{n-\alpha}}Q(z)u^{q_{1}}(z)dz
+\int_{B_{\bar{\lambda}}(0)}\frac{c_1R_{\alpha,n}}{|\frac{y|z|}{\bar{\lambda}}-\frac{\bar{\lambda}z}{|z|}|^{n-\alpha}}
Q(z^{\bar{\lambda}})\left(\frac{\bar{\lambda}}{|z|}\right)^{n-\alpha+\sigma_{1}}u^{q_{1}}_{0,\bar{\lambda}}(z)dz\\
& +\int_{B_{\bar{\lambda}}(0)}\frac{c_2R_{\alpha,n}}{\big|y-z\big|^{n-\alpha}}u^{q_{2}}(z)dz
+\int_{B_{\bar{\lambda}}(0)}\frac{c_2R_{\alpha,n}}{\big|\frac{y|z|}{\bar{\lambda}}-\frac{\bar{\lambda}z}{|z|}\big|^{n-\alpha}}
\left(\frac{\bar{\lambda}}{|z|}\right)^{\sigma_{2}}u_{0,\bar{\lambda}}^{q_{2}}(z)dz,\\
\end{split}\end{equation*}
and
\begin{equation*}\begin{split}
u_{0,\bar{\lambda}}(y)&=\int_{\mathbb{R}^{n}}\frac{c_1R_{\alpha,n}}{|y-z|^{n-\alpha}}\left(\frac{\bar{\lambda}}{|z|}\right)^{\sigma_{1}}
\bar{Q}_{0,\bar{\lambda}}(z)u^{q_{1}}_{0,\bar{\lambda}}(z)dz+\int_{\mathbb{R}^{n}}\frac{c_2R_{\alpha,n}}{\big|y-z\big|^{n-\alpha}}
\left(\frac{\bar{\lambda}}{|z|}\right)^{\sigma_{2}}u_{0 ,\bar{\lambda}}^{q_{2}}(z)dz\\
&=\int_{B_{\bar{\lambda}}(0)}\frac{c_1R_{\alpha,n}}{|y-z|^{n-\alpha}}\left(\frac{\bar{\lambda}}{|z|}\right)^{\sigma_{1}}
\bar{Q}_{0,\bar{\lambda}}(z)u^{q_{1}}_{0,\bar{\lambda}}(z)dz \\
&\quad+\int_{B_{\bar{\lambda}}(0)}\frac{c_1R_{\alpha,n}}{|\frac{y|z|}{\bar{\lambda}}-\frac{\bar{\lambda}z}{|z|}|^{n-\alpha}}
\bar{Q}_{0,\bar{\lambda}}(z^{\bar{\lambda}})\left(\frac{\bar{\lambda}}{|z|}\right)^{n-\alpha}u^{q_{1}}(z)dz\\
&\quad+\int_{B_{\bar{\lambda}}(0)}\frac{c_2R_{\alpha,n}}{\big|y-z\big|^{n-\alpha}}
\left(\frac{\bar{\lambda}}{|z|}\right)^{\sigma_{2}}u_{0,\bar{\lambda}}^{q_{2}}(z)dz
+\int_{B_{\bar{\lambda}}(0)}\frac{c_2R_{\alpha,n}}{\big|\frac{y|z|}
{\bar{\lambda}}-\frac{\bar{\lambda}z}{|z|}\big|^{n-\alpha}}u^{q_{2}}(z)dz,
\end{split}\end{equation*}
where
$$\bar{Q}_{x,\lambda}(y):=\bigg(\frac{1}{|\cdot|^{n-\alpha}}\ast u_{x,\lambda}^{\frac{n+\alpha}{n-\alpha}}\bigg)(y).$$
Let us recall that
$$K_{1,\bar{\lambda}}(y,z):=R_{\alpha,n}\left(\frac{1}{\big|y-z\big|^{n-\alpha}}
-\frac{1}{\big|\frac{y|z|}{\bar{\lambda}}-\frac{\bar{\lambda}z}{|z|}\big|^{n-\alpha}}\right).$$
It is easy to check that $K_{1,\bar{\lambda}}(y,z)>0$, and
$$\bar{Q}_{0,\bar{\lambda}}(z)=Q(z^{\bar{\lambda}})\left(\frac{\bar{\lambda}}{|z|}\right)^{n-\alpha},\ \ \quad \ \ \ Q(z)=\bar{Q}_{0,\bar{\lambda}}(z^{\bar{\lambda}})\left(\frac{\bar{\lambda}}{|z|}\right)^{n-\alpha},$$
and furthermore,
$$\bar{Q}_{0,\bar{\lambda}}(z)-Q(z)=\int_{B_{\bar{\lambda}}(0)}K_{1,\bar{\lambda}}(z,\xi)
\big(u^{\frac{n+\alpha}{n-\alpha}}_{0,\bar{\lambda}}(\xi)-u^{\frac{n+\alpha}{n-\alpha}}(\xi)\big)d\xi>0.$$
As a consequence, it follows immediately that, for any $y\in B_{\bar{\lambda}}(0)\setminus\{0\}$,
\begin{equation}\label{4-2-3}\begin{split}
\omega_{0,\bar{\lambda}}(y)&=c_1\int_{B_{\bar{\lambda}}(0)}K_{1,\bar{\lambda}}(y,z)Q(z)
\left(\left(\frac{\bar{\lambda}}{|z|}\right)^{\sigma_{1}}u^{q_{1}}_{0,\bar{\lambda}}(z)-u^{q_{1}}(z)\right)dz\\
&\ \ \ +c_1\int_{B_{\bar{\lambda}}(0)}K_{1,\bar{\lambda}}(y,z)(\bar{Q}_{0,\bar{\lambda}}(z)-Q(z))\left(\frac{\bar{\lambda}}{|z|}\right)^{\sigma_{1}}u^{q_{1}}_{0,\bar{\lambda}}(z)dz\\
&\ \ \ +c_2\int_{B_{\bar{\lambda}}(0)}K_{1,\bar{\lambda}}(y,z)\left(\left(\frac{\bar{\lambda}}{|z|}\right)^{\sigma_{2}}u^{q_{2}}_{0,\bar{\lambda}}(z)-u^{q_{2}}(z)\right)dz\\
&\geq c_1q_{1}\int_{B_{\delta}(y^0)}K_{1,\bar{\lambda}}(y,z)Q(z)\min\{u^{q_{1}-1}_{0,\bar{\lambda}}(z),u^{q_{1}-1}(z)\}\left(u_{0,\bar{\lambda}}(z)-u(z)\right)dz\\
&\quad+c_2q_{2}\int_{B_{\delta}(y^0)}K_{1,\bar{\lambda}}(y,z)\min\{u^{q_{2}-1}_{0,\bar{\lambda}}(z),u^{q_{2}-1}(z)\}\left(u_{0,\bar{\lambda}}(z)-u(z)\right)dz>0,
\end{split}\end{equation}
thus we arrive at \eqref{positive-3}. Furthermore, \eqref{4-2-3} also implies that there exists a $0<\hat{\eta}<\bar{\lambda}$ small enough such that, for any $y\in \overline{B_{\hat{\eta}}(0)}\setminus\{0\}$,
\begin{equation}\label{3-38}
  \omega_{0,\bar{\lambda}}(y)\geq c_1q_{1}\int_{B_{\frac{\delta}{2}}(y^{0})}c_{6}c_{5}c_{4}^{q_{1}-1}c_{0}\, dz
  +c_2q_{2}\int_{B_{\frac{\delta}{2}}(y^{0})}c_{6}c_{3}^{q_{2}-1}c_{0}\, dz=:\hat{c}_{0}>0.
\end{equation}

The rest of the proof is entirely similar to the proof of Lemma \ref{lemmasequali}, by using \eqref{positive-3} and \eqref{3-38}, we can show that there exists a $\varepsilon_{1}>0$ small enough such that, for all $\lambda\in[\bar{\lambda},\bar{\lambda}+\varepsilon_{1}]$,
\begin{equation}\label{3-45}
  \omega_{0,\lambda}(y)\geq0, \,\,\,\,\,\,\, \forall \,\, y\in B_{\lambda}(0)\setminus\{0\},
\end{equation}
which contradicts with the definition \eqref{defn-3} of $\bar{\lambda}(0)$, so we omit the details. This completes our proof of Lemma \ref{lemmasequali-3}.
\end{proof}

The rest of the proof is completely similar to the proof of Theorem \ref{Thm0}, so we omit the details. This concludes our proof of Theorem \ref{Thm1}.

\section{Proof of Corollary \ref{Cor1}}

In this section, we will give a brief proof of Corollary \ref{Cor1} by using Theorem \ref{Thm1}.

First, we can prove that the nonnegative solution $(u,v)$ to PDEs system \eqref{PDEM-S} also satisfies the following equivalent IEs system
\begin{equation}\label{IEM-S}\\\begin{cases}
u(x)=\int_{\mathbb{R}^{n}}\frac{R_{\alpha,n}}{|x-y|^{n-\alpha}}\left(v(y)u^{q_{1}}(y)+c_2u^{q_{2}}(y)\right)dy, \,\,\,\,\,\,\,\,\,\, x\in\mathbb{R}^{n}, \\
v(x)=\int_{\mathbb{R}^{n}}\frac{c_{1}}{|x-y|^{n-\alpha}}u^{\frac{n+\alpha}{n-\alpha}}(y)dy+C_{0}, \,\,\,\, \,\,\,\,\,\,\, x\in\mathbb{R}^{n},
\end{cases}\end{equation}
where $C_{0}\geq0$ is a nonnegative real number. The proof is similar to \cite{CFY,DFQ,ZCCY}, so we omit the details here.

Therefore, $u$ satisfies the following equation
\begin{equation}\label{5-1}
  (-\Delta)^{\frac{\alpha}{2}}u(x)=c_{1}\Big(\frac{1}{|x|^{n-\alpha}}\ast|u|^{\frac{n+\alpha}{n-\alpha}}\Big)u^{q_{1}}(x)+C_{0}u^{q_{1}}(x)+c_2u^{q_{2}}(x), \,\,\,\,\,\,\,\, x\in\mathbb{R}^{n}.
\end{equation}

In the following, we will discuss two different possible cases respectively.

\emph{Case (i)} $C_{0}>0$. In such cases, noting that $0<q_{1}\leq\frac{2\alpha}{n-\alpha}<\frac{n+\alpha}{n-\alpha}$, the Schr\"{o}dinger-Maxwell equation \eqref{5-1} involves at least one subcritical nonlinear term $C_{0}u^{q_{1}}(x)$, thus it is clear from the proof of Theorem \ref{Thm1} (see Section 3) that,
$u\equiv0$ in $\mathbb{R}^{n}$, and hence $(u,v)\equiv(0,C_{0})$.

\emph{Case (ii)} $C_{0}=0$. We will discuss two different possible sub-cases separately.

\emph{Sub-case (i).} If $c_{1}(\frac{2\alpha}{n-\alpha}-q_{1})+c_{2}(\frac{n+\alpha}{n-\alpha}-q_{2})=0$, by Theorem \ref{Thm1} and \eqref{5-1}, we have either $u\equiv0$ or $u$ must assume the following form
\begin{equation}\label{5-2}
  u(x)=C_{1}\left(\frac{\mu}{1+\mu^{2}|x-x_0|^2}\right)^{\frac{n-\alpha}{2}} \qquad \text{for some} \,\,\, \mu>0 \,\,\, \text{and} \,\,\, x_{0}\in\mathbb{R}^{n},
\end{equation}
where the positive constant $C_{1}$ depends on $n,\alpha,c_{1},c_{2}$.

If $u\equiv0$, then we have $(u,v)\equiv(0,0)$.

From Lemma 4.1 in Dai, Fang, et al. \cite{DFHQW}, we get, for any $0<s<\frac{n}{2}$,
\begin{equation}\label{formula}
  \int_{\mathbb{R}^{n}}\frac{1}{|x-y|^{2s}}\left(\frac{1}{1+|y|^{2}}\right)^{n-s}dy=I(s)\left(\frac{1}{1+|x|^{2}}\right)^{s},
\end{equation}
where $I(s):=\frac{\pi^{\frac{n}{2}}\Gamma\left(\frac{n-2s}{2}\right)}{\Gamma(n-s)}$. If $u$ assumes the explicit form \eqref{5-2}, we can deduce from \eqref{IEM-S} and formula \eqref{formula} that
\begin{equation}\label{5-3}
  v(x)=C_{2}\left(\frac{\mu}{1+\mu^{2}|x-x_0|^2}\right)^{\frac{n-\alpha}{2}} \qquad \text{for some} \,\,\, \mu>0 \,\,\, \text{and} \,\,\, x_{0}\in\mathbb{R}^{n},
\end{equation}
where the positive constant $C_{2}$ depends on $n,\alpha,c_{1},c_{2}$. Thus $(u,v)$ must assume the explicit form \eqref{explicit}.

\emph{Sub-case (ii).} If $c_{1}(\frac{2\alpha}{n-\alpha}-q_{1})+c_{2}(\frac{n+\alpha}{n-\alpha}-q_{2})>0$, by Theorem \ref{Thm1} and \eqref{5-1}, we have $u\equiv0$, and hence $(u,v)\equiv(0,0)$.

This concludes our proof of Corollary \ref{Cor1}.

\section*{Acknowledgements}
The authors are grateful to the referees for their careful reading and valuable comments and suggestions that improved the presentation of the paper.

\end{document}